\newtheorem{theorem}{Theorem}[section]
\newtheorem{lemma}[theorem]{Lemma}
\newtheorem{proposition}[theorem]{Proposition}
\newtheorem{definition}[theorem]{Definition}
\newtheorem{remark}[theorem]{Remark}
\newtheorem{corollary}[theorem]{Corollary}
\numberwithin{equation}{section}
\newcommand{\nc}{\normalcolor}
\newcommand{\dif}{\mathrm{d}}
\newcommand{\E}{\mathbf{E}}
\newcommand{\Prob}{\mathbf{P}}
\newcommand{\R}{\mathbf{R}}
\newcommand{\C}{\mathbf{C}}
\newcommand{\N}{\mathbf{N}}
\newcommand{\X}{\mathcal{X}}
\newcommand{\ii}{\mathrm{i}}
\newcommand{\ee}{\mathrm{e}}
\title[Law of fractional logarithm for random matrices]{Law of fractional logarithm for random matrices}
\date{\today}
\begin{document}

\maketitle

\vspace{0.25cm}

\renewcommand{\thefootnote}{\fnsymbol{footnote}}

\noindent
\mbox{}%
\hfill%
\begin{minipage}{0.19\textwidth}
	\centering
	{Zhigang Bao}\footnotemark[1]\\
	\footnotesize{\textit{zgbao@hku.hk}}
\end{minipage}
\hfill%
\begin{minipage}{0.19\textwidth}
	\centering
	{Giorgio Cipolloni}\footnotemark[2]\\
	\footnotesize{\textit{gcipolloni@arizona.edu}}
\end{minipage}
\hfill%
\begin{minipage}{0.19\textwidth}
	\centering
	{L\'aszl\'o Erd\H{o}s}\footnotemark[3]\\
	\footnotesize{\textit{lerdos@ist.ac.at}}
\end{minipage}
\hfill%
\begin{minipage}{0.19\textwidth}
	\centering
	{Joscha Henheik}\footnotemark[3]\\
	\footnotesize{\textit{joscha.henheik@ist.ac.at}}
\end{minipage}
\hfill%
\begin{minipage}{0.19\textwidth}
	\centering
	{Oleksii Kolupaiev}\footnotemark[3]\\
	\footnotesize{\textit{okolupaiev@ist.ac.at}}
\end{minipage}
\hfill%
\mbox{}%
\footnotetext[1]{Department of Mathematics, The University of Hong Kong.}
\footnotetext[1]{Supported by Hong Kong RGC Grant GRF 16304724, NSFC12222121 and NSFC12271475.}
\footnotetext[2]{Department of Mathematics, University of Arizona, 617 N Santa Rita Ave, Tucson, AZ 85721, USA.}
\footnotetext[3]{Institute of Science and Technology Austria, Am Campus 1, 3400 Klosterneuburg, Austria. 
}
\footnotetext[3]{Supported by the ERC Advanced Grant ``RMTBeyond'' No.~101020331.}

\renewcommand*{\thefootnote}{\arabic{footnote}}
\vspace{0.25cm}

\begin{abstract} We prove the Paquette-Zeitouni law of fractional logarithm (LFL) for the extreme eigenvalues  \cite{PZ_LIL}
 in full generality, and thereby verify a conjecture from \cite{PZ_LIL}. Our result holds for  any Wigner minor process  and
both symmetry classes, in particular for the GOE minor process, while \cite{PZ_LIL} and the recent full resolution of LFL 
 by Baslingker et.~al.~\cite{baslingker2024paquette} cover only  the GUE case which is determinantal.
 Lacking the possibility for a direct comparison with the Gaussian case, we develop a robust and natural  method
 for both key parts of the proof. On one hand, 
 we rely on a powerful martingale technique 
 to describe precisely  the strong correlation  between the largest eigenvalue of an $N\times N$ Wigner matrix
 and its $(N-k)\times (N-k)$ minor if $k\ll N^{2/3}$. On the other hand, we use dynamical methods 
 to show that this correlation is weak if $k\gg N^{2/3}$. 
\end{abstract}
\vspace{0.15cm}

\footnotesize \textit{Keywords:} Law of iterated logarithm, Local Law, Zigzag Strategy, Dyson Brownian motion, decorrelation estimate. 

\footnotesize \textit{2020 Mathematics Subject Classification:} 60B20, 60G55, 82C10.
\vspace{0.25cm}
\normalsize

\section{Introduction and main results}

Consider the largest eigenvalue $\widetilde\lambda_1^{(N)}$ of an $N\times N$ standard Wigner matrix, i.e. a
random Hermitian matrix $H$ with i.i.d. entries with the standard normalisation $\E h_{ij}=0$,
$\E |h_{ij}|^2=1/N$. It is well known that $\widetilde\lambda_1^{(N)}\to 2$ almost surely as $N\to\infty$
and the fluctuation of its appropriate rescaling, $\lambda_1^{(N)}: = N^{2/3}(\widetilde\lambda_1^{(N)}- 2)$,
follows the universal Tracy-Widom law  \cite{TW_level-spacing, TW_Orth_ME}. These fundamental results can be viewed as analogues of
the law of large numbers and the central limit theorem for $S_N := N^{-1/2}\sum_{j=1}^N Y_j$, i.e. for a rescaled
sums of i.i.d. random variables $Y_j$ with zero mean and unit variance. It is also well known 
that the extremal fluctuations
of $S_N$ are governed by the Hartman-Wintner \cite{hartman1941law} 
{\it law of iterated logarithm (LIL)} asserting that
\begin{equation}\label{LIL}
  \liminf_{N \to \infty} \frac{S_N}{\sqrt{2\log \log N}}=-1, \quad  \limsup_{N \to \infty} \frac{S_N}{\sqrt{2\log \log N}}=1,\quad \mbox{almost surely.}
\end{equation}

Motivated by a very natural question of Gil Kalai \cite{Kalai},
 Paquette and Zeitouni in \cite{PZ_LIL} considered  an analogue of \eqref{LIL}
for the extremal fluctuations of $\lambda_1^{(N)}$.  Note that, unlike the Tracy-Widom limit,  this question concerns
the joint distribution of the entire sequence $\lambda_1^{(N)}$. Therefore, the sequence $H^{(N)}$ needs to be
defined consistently by taking appropriately scaled $N\times N$ upper left corners of 
a doubly infinite  symmetric \nc array of i.i.d. random variables (the precise definition of this {\it Wigner minor process} will be given
later). Paquette and Zeitouni discovered that for complex Gaussian matrices (GUE ensemble)
a complete analogue of LIL holds in the following form
\begin{equation}\label{LFL1}
\liminf_{N \to \infty}\frac{\lambda_1^{(N)}}{(\log N)^{1/3}} = -4^{1/3}, \quad \limsup_{N \to \infty}\frac{\lambda_1^{(N)}}{(\log N)^{2/3}} = 
\Big(\frac{1}{4}\Big)^{2/3}, \quad \mbox{almost surely},
\end{equation}
and they coined it the {\it law of fractional logarithm (LFL)}. In fact,  they proved  the  $\limsup$ part
of \eqref{LFL1},
while for $\liminf$ the precise constant was  conjectured and only upper and lower limits were proven in \cite{PZ_LIL}.
The conjectured constant was proven only  very recently by Baslingker et.~al.~\cite{baslingker2024paquette}.

Our main result is the proof of LFL \eqref{LFL1} for any Wigner minor process without Gaussianity assumption
on the matrix elements.  Moreover, we can handle both real symmetric $(\beta=1)$ and complex
Hermitian $(\beta=2)$ symmetry classes (the constants in \eqref{LFL1} depend on $\beta$, see \eqref{eq:LFL}), 
 while the proofs in \cite{PZ_LIL} and \cite{baslingker2024paquette} were restricted to the determinantal GUE minor process. This solves the natural LFL universality conjecture of Paquette and Zeitouni in~\cite{PZ_LIL} for both symmetry classes.
 Our method is very robust, in fact it can be extended well beyond the i.i.d. case.
Finally, we believe that our proof is also very natural since,
for its  most critical part, it mimics Kolmogorov’s classical  proof 
\cite{kolmogorov1931analytischen}
of LIL and its extension to martingales  \cite{stout1970martingale}.
The corresponding steps  in \cite{PZ_LIL} and \cite{baslingker2024paquette} are technically much heavier and 
rely on specific determinantal formulas available only for GUE. 

\medskip

In order to explain the new insights in our  proof, we recall 
 the basic heuristic mechanisms behind \eqref{LIL} and \eqref{LFL1}.
We start with noticing  two main differences between \eqref{LIL} and \eqref{LFL1}.
First, unlike in \eqref{LIL}\nc, the $\limsup$ and $\liminf$ results in \eqref{LFL1} \nc are genuinely different, corresponding to the fact that
the Tracy-Widom distribution is strongly asymmetric, in particular its left tail  decays much faster, as $\exp{(-x^3/12)}$,
than  its right tail with an $\exp{(-4x^{3/2}/3)}$ decay. Second, the  double logarithmic factor 
 in \eqref{LIL} is changed to a fractional power of the logarithm in \eqref{LFL1}.
Recall that the $\sqrt{2\log\log N}$ factor in \eqref{LIL} is the consequence of two effects: the correlation structure of
the sequence $S_N$ and the $\exp{(-x^2/2)}$ tail behavior of the standard Gaussian limit of $S_N$.
Indeed, $S_N$ and $S_{N+M}$ are strongly correlated
if $M\ll N$ and become essentially independent if $M\gg N$. Roughly speaking among the random variables
$S_1, S_2, \ldots, S_N$ only $\log N$ are (essentially) independent and the tail probability of $S_N$
 is exactly $1/\log N$
at level  $x=\sqrt{2\log\log N}$. 
Back to \eqref{LFL1}, note that both fractional powers are chosen such that the tail probability in the  corresponding 
moderate deviation regime is of order $N^{-1/3}$. This choice is justified by the remarkable fact that 
the threshold for (in)dependence between $\lambda_1^{(N)}$ and $\lambda_1^{(N+M)}$ 
lies at $M\sim N^{2/3}$, thus among $\lambda_1^{(1)}, \lambda_1^{(2)}, \ldots, \lambda_1^{(N)}$
there are roughly $N^{1/3}$ (almost) independent variables. This decorrelation transition at $M\sim N^{2/3}$
for the largest eigenvalues of the minor sequence was first discovered by Forrester and Nagao 
for the GUE case \cite{determinantal_corr} and we recently proved it  in full generality for both symmetry classes
and for general Wigner  matrices \cite{minor}.

Given these heuristics and similar reasonings to \cite{PZ_LIL} and \cite{baslingker2024paquette}, our proof
 has two main inputs. First, we need very accurate lower and upper 
bounds on both the right and left tails of $\lambda_1^{(N)}$
in the moderate deviation regime. This problem has a long history, summarized 
after Proposition~\ref{prop:tails} below. Here  we rely on the very  recent results of Baslingker~et.~al.~\cite{baslingker2024optimal} that proved exactly what we need for GOE and GUE ensembles.
 These estimates need to be extended to our general Wigner setup, but this is an easy adaptation 
 of the {\it Green function comparison theorem (GFT)} developed in \cite{small_dev}
 for tail estimates.

Our main novelty compared with  \cite{PZ_LIL} and \cite{baslingker2024paquette} 
lies in verifying the  second input, the sharp decorrelation threshold
in the sequence $\lambda_1^{(N)}$.  This input itself has two parts: (i) the {\it decorrelation estimate} proves that $\lambda_1^{(N)}$ and $\lambda_1^{(N+M)}$
are essentially independent if $M\gg N^{2/3}$, while (ii) the
 {\it correlation estimate}
shows that $\lambda_1^{(N)}$ and $\lambda_1^{(N+M)}$ are strongly correlated if $M\ll N^{2/3}$,
i.e. they are much closer to each other
than their natural fluctuation scale.
Both estimates need to be effective in the moderate deviation regimes, while 
our recent work \cite{minor} on this threshold concerned the typical regime. For part (i)
we strengthen our decorrelation proof from \cite{minor} based upon Dyson Brownian Motion (DBM)
and GFT techniques to cover this extended regime as well. The brevity of this actual  proof  in Section~\ref{app:decor} 
is misleading since it heavily relies on DBM techniques from \cite{landon2017edge, Bou_extreme}
combined with eigenvector overlap bounds obtained from multi-resolvent local laws from \cite{eigenv_decorr}
in a way that was introduced in \cite{macroCLT_complex} and developed in several follow-up papers (see e.g. \cite{minor, bourgade2024fluctuations, cipolloni2023quenched, spectral_radius}).
In contrast to this heavy machinery, our correlation proof is surprisingly elementary.
Using a simple Schur complement formula, we can bound  $\lambda_1^{(N+M)}- \lambda_1^{(N)}$ 
from below by a martingale, see \eqref{mart} below. In the actual proof
for the upper bound on $\limsup$ in \eqref{LFL1} (as well as the analogous lower bound
on $\liminf$), we choose an appropriate subsequence $N=N_k\sim k^{3-\epsilon}$
as in \cite[Section 3]{PZ_LIL}. The tail bound together with a Borel-Cantelli
lemma guarantee that along this subsequence $\lambda_1^{(N_k)}$ is not too big almost surely.
The standard  maximal inequality for martingales then easily extends this
fact  to the entire sequence $\lambda_1^{(N)}$. The analogous part of the proof 
in \cite{PZ_LIL} relied on further dyadic decompositions and explicit formulas available only in the GUE setup.
Establishing the decorrelation threshold in \cite{baslingker2024paquette} is even more involved 
as it relies on
 Baryshnikov’s distributional identity \cite{baryshnikov2001gues} between 
the largest eigenvalues of the GUE minor process and the vector of passage times 
in {\it Brownian last passage percolation}.
We believe that our martingale approach handles the core of this problem in the most natural way,
and, more importantly, it directly applies to general Wigner matrices.

\nc

\subsection{Main results}
Our main object of interest is the \emph{Wigner minor process}: 
\begin{definition}[Wigner minor process]  \label{def:minor}
Let $X = (x_{ij})_{i,j \in \N}$ be a doubly infinite array of independent (up to  
symmetry $x_{ij} = \overline{x}_{ji}$) real ($\beta = 1$) or complex ($\beta = 2$) random variables. We assume them to be centered, $\E x_{ij} = 0$ with variances given by 
\begin{equation} \label{eq:Xdef}
\E x_{ij}^2=1+\delta_{ij} \quad \text{for} \quad \beta = 1 \qquad \text{and} \qquad \E |x_{ij}|^2 = 1 \,, \quad \E x_{ij}^2 = \delta_{ij} \quad \text{for} \quad \beta = 2 \,. 
\end{equation}
We assume that all moments of the entries are finite, meaning that there exist constants $C_p > 0$ such that 
\begin{equation} \label{eq:Xdefmoments}
\sup_{i,j \in \N} \E |x_{ij}|^p \le C_p \,. 
\end{equation}
For any $N\in\N$ we introduce 
\begin{equation} \label{eq:Hdef}
	H^{(N)}:=N^{-1/2}X^{(N)},
\end{equation}
where $X^{(N)}$ is the $N \times N$ upper left corner of $X$. Note that  $H^{(N)}$ is a standard real symmetric or complex Hermitian Wigner matrix
for each $N$. The sequence $\big(H^{(N)}\big)_{N \in \N}$
is called the Wigner minor process.
\end{definition}
The eigenvalues of $H^{(N)}$, in decreasing order, will be denoted by
 $\{\widetilde{\lambda}_j^{(N)}\}_{j=1}^{N}$. 
It is well known (see, e.g., \cite{soshnikov1999universality, EYY12, tao2010random}) that, as $N \to \infty$, it holds
\begin{equation}
	\lambda_1^{(N)}:= N^{2/3}(\widetilde{\lambda}_1^{(N)}-2)\Rightarrow \mathrm{TW}_\beta, \qquad \beta=1, 2\,,  \label{eq:TWconv}
\end{equation}
in the sense of distributions, where $\mathrm{TW}_\beta$ denotes the Tracy-Widom distribution with parameter $\beta$ \cite{TW_level-spacing, TW_Orth_ME}. 

In this paper, we show that $\lambda_1^{(N)}$ obeys a universal \emph{law of fractional logarithm}, independent of the precise single entry distributions of $X$. 
We formulate our results for the largest eigenvalue, but they naturally hold for 
the lowest eigenvalue $\lambda_N^{(N)}$ as well.

\begin{theorem}[Law of fractional logarithm] \label{thm:main}
Let $\big(\lambda_1^{(N)}\big)_{N \in \N}$ be the (shifted and scaled according to \eqref{eq:TWconv}) largest eigenvalues of a real symmetric ($\beta = 1$) or complex Hermitian ($\beta = 2$) Wigner minor process $\big(H^{(N)}\big)_{N \in \N}$ as in Definition \ref{def:minor}.  Then, almost surely, we have
\begin{equation} \label{eq:LFL}
	\liminf_{N \to \infty} \frac{\lambda_1^{(N)}}{(\log N)^{1/3}} = - \left(\frac{8}{\beta}\right)^{1/3} \qquad \text{and} \qquad 
\limsup_{N \to \infty} \frac{\lambda_1^{(N)}}{(\log N)^{2/3}} = \left(\frac{1}{2 \beta}\right)^{2/3} \,. 
\end{equation}
\end{theorem}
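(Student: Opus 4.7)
The plan is to reduce \eqref{eq:LFL} to four one-sided statements: an upper bound on $\limsup$ and a lower bound (in absolute value) on $\liminf$ (the ``easy'' directions, asserting that $\lambda_1^{(N)}$ stays within the claimed window almost surely), and the matching reverse inequalities (the ``hard'' directions, asserting that the threshold is reached infinitely often). Each direction follows a common template: choose a subsequence $(N_k)$ adapted to the sharp decorrelation scale $M\sim N^{2/3}$, apply a Borel--Cantelli argument along it using the sharp moderate-deviation tail bounds of \cite{baslingker2024optimal} (extended to general Wigner matrices via the Green function comparison scheme of \cite{small_dev}), and then interpolate between subsequence points using either a correlation or a decorrelation input tied to the same threshold.

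For the upper bound on $\limsup$, following \cite{PZ_LIL} I would pick a dense subsequence $N_k \sim k^{3-\epsilon}$ for some small $\epsilon > 0$, so that consecutive gaps satisfy $N_{k+1} - N_k \ll N_k^{2/3}$. The right-tail upper bound at the inflated threshold $(1+\eta)(1/(2\beta))^{2/3}(\log N_k)^{2/3}$ yields probabilities of order $N_k^{-(1+\eta')/3}$, which are summable in $k$; the first Borel--Cantelli lemma then pins down the subsequence. To transfer this to every $N \in [N_k,N_{k+1}]$, I would invoke the Schur complement identity: regarding $H^{(N)}$ as built from $H^{(N-1)}$ by appending one row and column, the variational principle forces $\widetilde\lambda_1^{(N)} - \widetilde\lambda_1^{(N-1)} \ge M_N - M_{N-1}$, where $M_N$ is a martingale constructed from quadratic forms in the newly added row against the resolvent of $H^{(N-1)}$. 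A Doob maximal inequality with moment or concentration bounds for the increments then shows that the oscillation of $\widetilde\lambda_1^{(N)}$ on the short window $[N_k,N_{k+1}]$ is of smaller order than $(\log N_k)^{2/3}$, closing this direction.

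For the lower bound on $\limsup$, I would instead take a sparser subsequence with $N_{k+1} - N_k \gg N_k^{2/3}$ yet with $\sum_k N_k^{-(1-\delta)/3} = \infty$, so that the events $\{\lambda_1^{(N_k)} > (1-\eta)(1/(2\beta))^{2/3}(\log N_k)^{2/3}\}$ become asymptotically independent by the decorrelation estimate and have non-summable probabilities by the matching right-tail lower bound. The second Borel--Cantelli lemma (in the form applicable to weakly dependent events, such as Kochen--Stone) then forces infinitely many of these events, giving the desired lower bound on $\limsup$. The two $\liminf$ statements follow by completely parallel arguments using the left-tail estimates, which decay faster (as $\exp(-c\,x^3)$) and hence produce the different scaling $(\log N)^{1/3}$ and the constant $(8/\beta)^{1/3}$.

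The hard part will be upgrading the decorrelation estimate to the moderate-deviation regime: the result in \cite{minor} only covered the typical Tracy--Widom window, whereas here both $\lambda_1^{(N)}$ and $\lambda_1^{(N+M)}$ must be handled while conditioned to sit deep in a tail at scale $(\log N)^{2/3}$ or $(\log N)^{1/3}$. My plan is to run Dyson Brownian motion on a short time scale and combine it with the multi-resolvent local laws of \cite{eigenv_decorr} to control the relevant eigenvector overlaps uniformly in this shifted regime, in the spirit of \cite{macroCLT_complex}. By contrast, the martingale-based correlation bound is elementary once the right Schur-complement martingale is identified, and it is precisely this elementary character that will make the overall proof robust enough to accommodate both symmetry classes $\beta = 1, 2$ and general non-Gaussian Wigner entries at once.
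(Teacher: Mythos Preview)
Your proposal is correct and follows essentially the same route as the paper: the same subsequence choices $N_k\sim k^{3\mp\epsilon}$ on either side of the decorrelation threshold, the same Schur-complement martingale combined with Doob's maximal inequality for the interpolation step, and the same DBM-plus-GFT strategy to upgrade the decorrelation of \cite{minor} to the moderate-deviation regime. The only cosmetic difference is that for the ``hard'' directions the paper uses a direct second-moment argument (showing $S_{M,\delta}/\E S_{M,\delta}\to 1$ via a variance bound) rather than invoking Kochen--Stone, but the two are equivalent here.
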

For the GUE case ($\beta=2$,  $x_{ij}$ are standard Gaussian) the result on $\limsup$ and an explicit 
range on $\liminf$ have been established in the pioneering paper of Paquette and Zeitouni \cite{PZ_LIL}, while 
the precise constant for $\liminf$ was obtained very  recently 
by Baslingker et.~al.~\cite{baslingker2024paquette}. The limiting constants in \eqref{eq:LFL} are
directly related to the exponents in the tail probability estimates for the Tracy-Widom distribution, 
see  Proposition~\ref{prop:tails} and explanation thereafter.

As a corollary to Theorem \ref{thm:main}, we identify all possible limit points. The proof is given in Section \ref{sec:corollary}. 

\begin{corollary}[All possible limit points] \label{cor:limits}  Under the assumptions and notations of Theorem \ref{thm:main}, we have
	\begin{align}
		\bigcap_{m=1}^{\infty} \overline{\Big\{ \lambda_{1}^{(N)}/ (\log N)^{2/3} : N\geq m\Big\}}= \left[0, \Big(\frac{1}{2\beta}\Big)^{2/3}\right] \label{closure-2/3}
	\end{align}
	and 
	\begin{align}
		\bigcap_{m=1}^{\infty} \overline{\Big\{\lambda_1^{(N)}/ (\log N)^{1/3}:N\geq m\Big\}}=\left[-\Big(\frac{8}{\beta}\Big)^{1/3}, \infty\right],  \label{closure-1/3}
	\end{align}
where $\overline{E}$ denotes the closure of a set $E\subset \R$. 
\end{corollary}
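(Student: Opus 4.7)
Let $a_N := \lambda_1^{(N)}/(\log N)^{2/3}$, $b_N := \lambda_1^{(N)}/(\log N)^{1/3}$, $U := (1/(2\beta))^{2/3}$, and $L := (8/\beta)^{1/3}$. The intersections in \eqref{closure-2/3}--\eqref{closure-1/3} are by definition the sets of accumulation points of $(a_N)$ and $(b_N)$ in $\overline{\R}$. My plan is to first pin down the endpoints and the containments via Theorem~\ref{thm:main}, and then fill in the interiors by a direct Borel--Cantelli argument using precisely the tail and decorrelation inputs already needed for the proof of Theorem~\ref{thm:main}.

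For the containments, Theorem~\ref{thm:main} gives $\limsup a_N = U$ and $\liminf b_N = -L$ almost surely. Since $a_N = b_N (\log N)^{-1/3}$, the lower bound on $b_N$ yields $a_N \ge -(L + o(1))(\log N)^{-1/3} \to 0$, so $\liminf a_N \ge 0$; conversely $b_N = a_N (\log N)^{1/3}$ together with $\limsup a_N = U > 0$ gives $\limsup b_N = +\infty$. Hence accumulation points of $(a_N)$ lie in $[0, U]$ and those of $(b_N)$ in $[-L, +\infty]$. All four endpoints are attained: $U$ and $-L$ by the definitions of $\limsup$ and $\liminf$; $0$ along any $\liminf$-realising subsequence $(N_k)$ of $b_N$, since $a_{N_k} = b_{N_k}/(\log N_k)^{1/3} \to 0$; and $+\infty$ along any $\limsup$-realising subsequence of $a_N$.

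For the interior, fix $x \in (0,U)$ and small $\epsilon > 0$. The two-sided upper-tail estimates of Proposition~\ref{prop:tails} yield
\begin{equation*}
\Prob\bigl(a_N \in [x - \epsilon, x + \epsilon]\bigr) \;\asymp\; N^{-2\beta(x-\epsilon)^{3/2}/3},
\end{equation*}
whose exponent is strictly less than $1/3$ once $\epsilon$ is small (since $x < U$). I would choose a sparse subsequence $(N_k)$ of the form $N_k \sim k^{3+\eta}$ for some sufficiently small $\eta = \eta(x) > 0$. This simultaneously ensures $N_{k+1} - N_k \gg N_k^{2/3}$ (so the decorrelation estimate of Section~\ref{app:decor} provides quantitative asymptotic independence of $\lambda_1^{(N_{k_1})}$ and $\lambda_1^{(N_{k_2})}$ for $k_1 \ne k_2$) and divergence of the series $\sum_k \Prob(a_{N_k} \in [x-\epsilon, x+\epsilon])$. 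A quantitative second Borel--Cantelli argument adapted to asymptotically independent events then yields $a_{N_k} \in [x-\epsilon, x+\epsilon]$ infinitely often almost surely, and diagonalising in $\epsilon \to 0$ extracts a subsequence along which $a_N \to x$. The same scheme applied to the left tail of $\lambda_1^{(N)}$ (with tail exponent $\beta |y|^3/24 < 1/3$ for $|y| < L$) realises every $y \in (-L, 0)$ as an accumulation point of $(b_N)$, while interior points $y \in (0, +\infty)$ are even easier, because the relevant right-tail probability on scale $(\log N)^{1/3}$ is only subpolynomial (of order $\exp(-c \sqrt{\log N})$), so the corresponding series diverges without effort.

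The main technical obstacle is replacing the informal ``asymptotic independence'' of the events along $(N_k)$ by effective decorrelation bounds strong enough to drive a rigorous second Borel--Cantelli statement in the moderate deviation regime. This is precisely the content of the decorrelation estimate of Section~\ref{app:decor} already needed for Theorem~\ref{thm:main}, so the corollary requires no genuinely new technical ingredient beyond the tools developed for the main theorem.
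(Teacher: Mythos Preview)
Your containment and endpoint arguments match the paper exactly. For the interior points, however, the paper takes a markedly different and more economical route than your direct second Borel--Cantelli scheme.

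The paper's argument is a discrete intermediate value theorem. Since $\limsup a_N = U$ and $\liminf a_N = 0$, for any fixed $t \in (0,U)$ there are infinitely many $N$ with $a_{N+1} \le t \le a_N$ (the sequence must cross $t$ on each downward excursion). On such crossings,
\[
0 \le t - a_{N+1} \le a_N - a_{N+1},
\]
and the paper bounds the right-hand side using the one-step martingale lower bound already established in Section~\ref{subsec:maxmart}, namely \eqref{031710}, which gives $\lambda_1^{(N+1)} - \lambda_1^{(N)} \ge N^{-1}(|\xi_1^{(N+1)}|^2 - 1) + \mathcal{O}_\prec(N^{-4/3+C\xi})$. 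A single application of the \emph{first} Borel--Cantelli then shows $a_N - a_{N+1} \le \varepsilon$ eventually, for any $\varepsilon > 0$. Hence $|t - a_{N+1}| \le \varepsilon$ along the crossing subsequence, and $t$ is an accumulation point. The argument for $(b_N)$ is identical with $(\log N)^{1/3}$ in place of $(\log N)^{2/3}$.

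Your approach is also viable but heavier: it re-invokes the full decorrelation machinery of Section~\ref{app:decor} and a second-moment Borel--Cantelli for each target point. Two technical caveats are worth flagging. First, Proposition~\ref{prop:decor} is stated for tail events $\mathfrak{F}_l(x_l)$, not two-sided windows, so you would need to upgrade it via inclusion--exclusion to handle $\{a_N \in [x-\epsilon, x+\epsilon]\}$. Second, and more seriously, for $y \in (0,\infty)$ in the $(b_N)$ problem the relevant right-tail threshold is $y(\log N)^{1/3}$, which on the $(\log N)^{2/3}$ scale of \eqref{eq:tailevents} corresponds to $x_l = y(\log N_l)^{-1/3} \to 0$, falling outside the range $K^{-1} \le x_l \le K$ of Proposition~\ref{prop:decor}. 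The decorrelation proof likely extends to this regime (the probabilities $\exp(-c\sqrt{\log N})$ still dominate the $N_2^{-\delta}$ error), but this is not stated and would need separate verification. The paper's crossing argument sidesteps both issues entirely, since it uses only the consecutive-step control from \eqref{031710} and the first Borel--Cantelli lemma.
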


\begin{remark}[Extensions beyond Wigner matrices]\label{rm:ext} 
 We prove our results for the setup where $x_{ij}$ are independent, 
centred and have unit variance \eqref{eq:Xdef}, i.e.~$H^{(N)}$ is a standard Wigner matrix. However, our methods can 
 handle much more general random matrix ensembles that involve non-centred and correlated entries  
 under the conditions of \cite{slow_corr, cusp_univ}. 
 
 To keep the presentation short, we explain only the 
 uncorrelated situation, i.e. the case of  Wigner-type matrices with diagonal deformation.
This means to consider a deterministic doubly infinite array $\Sigma = (\Sigma_{ij})_{i,j\in \N}$ and an 
infinite vector ${\bm a} = (a_i)_{i\in \N}$ 
such that $c\le \Sigma_{ij}\le C$ and $|a_{i}|\le C$ for any $i,j$ with some fixed positive constants $c, C$. Then, as an extension of
\eqref{eq:Hdef}, we consider
\begin{equation} \label{eq:Hdefgen}
	H^{(N)}:=N^{-1/2}(\Sigma\odot X)^{(N)} + \mathrm{diag} ({\bm a}^{(N)}),
\end{equation}
where $\odot$ indicates the entry-wise matrix multiplication (Kronecker product)
and $\mathrm{diag} ({\bm a}^{(N)})$ is the diagonal matrix with $(a_1, a_2, \ldots, a_N)$ 
in the diagonal. For any fixed $N$, $H^{(N)}$ is a Wigner-type matrix with {\it flatness} condition
\cite{univ_W-type}. 
For large $N$, the density of eigenvalues is close to a deterministic function, 
called the {\it self-consistent density of states (scDOS)} but in general it is not the semicircle law.
Typically the scDOS is supported on finitely many intervals (called bands) with square root singularities at the edges
\cite{sing_QVE, QVE}.
This latter property ensures that the extreme eigenvalues near the edges still obey the Tracy-Widom
law and the number of eigenvalues within each band is constant with very high probability \cite{Alt_band}.

Our methods in the current paper are sufficiently robust to prove Theorem~\ref{thm:main} for
the (appropriately rescaled) smallest and  largest  eigenvalues of $H^{(N)}$ in \eqref{eq:Hdefgen} and even 
for all extreme eigenvalues at the internal band edges assuming that $\Sigma$ and ${\bm a}$ are such that 
the gaps between neighboring bands do not close as $N$ increases.
 This is the case, for example, if ${\bm a}=(1, -1, 1, -1, \ldots)$
and $\Sigma_{ij} := c<1$ for all $i,j$. To keep the current presentation simple, we do not give a full proof
of this generalization, but in Section~\ref{sec:wignertype} we will explain the main ideas.
\end{remark}

\subsection{Notations} 
For positive quantities $f,g$ we write $f\lesssim g$ (or $f=\mathcal{O}(g)$) and $f\sim g$ if $f \le C g$ or $c g\le f\le Cg$, respectively, for some constants $c,C>0$ which only depend on the constants appearing in the moment condition \eqref{eq:Xdefmoments}. 

For any natural number $n$ we set $[n]: =\{ 1, 2,\ldots ,n\}$. Matrix entries are indexed by lowercase Roman letters $a, b, c, ...$ from the beginning of the alphabet. We denote vectors by bold-faced lowercase Roman letters ${\bm x}, {\bm y}\in\C ^N$ for some $N\in\N$. Vector and matrix norms, $\lVert {\bm x}\rVert$ and $\lVert A\rVert$, indicate the usual Euclidean norm and the corresponding induced matrix norm. For any $N\times N$ matrix $A$ we use the notation $\langle A\rangle:= N^{-1}\mathrm{Tr}  A$ for its normalized trace. Moreover, for vectors ${\bm x}, {\bm y}\in\C^N$ we denote their scalar product by $\langle {\bm x},{\bm y}\rangle:= \sum_{i} \overline{x}_i y_i$. 

Finally, we use the concept of ``with very high probability'' \emph{(w.v.h.p.)} meaning that for any fixed $C>0$, the probability of an $N$-dependent event is bigger than $1-N^{-C}$ for $N\ge N_0(C)$. Moreover, even if not explicitly stated, every estimate involving $N$ is understood to hold for $N$ being sufficiently large. \nc  We also introduce the notion of \emph{stochastic domination} (see e.g.~\cite{loc_sc_gen}): given two families of non-negative random variables
\[
X=\left(X^{(N)}(u) : N\in\N, u\in U^{(N)} \right) \quad \mathrm{and}\quad Y=\left(Y^{(N)}(u) : N\in\N, u\in U^{(N)} \right)
\] 
indexed by $N$ (and possibly some parameter $u$  in some parameter space $U^{(N)}$), 
we say that $X$ is stochastically dominated by $Y$, if for all $\xi, C>0$ we have 
\begin{equation}
	\label{stochdom}
	\sup_{u\in U^{(N)}} \mathbf{P}\left[X^{(N)}(u)>N^\xi  Y^{(N)}(u)\right]\le N^{-C}
\end{equation}
for large enough $N\ge N_0(\xi,C)$. In this case we use the notation $X\prec Y$ or $X= \mathcal{O}_\prec(Y)$.

\section{Proof of Theorem \ref{thm:main}}\label{sec:2}

In this section, we provide the proof of Theorem \ref{thm:main}. The argument is split into two parts: First, in Section \ref{subsec:decor} we establish a lower bound for $\limsup$ and an upper bound for $\liminf$ via a suitable {\it decorrelation estimate} on the moderate large deviation tail events for  $\lambda_1^{(N)}$ for different $N$’s. 
 Afterwards, in Section \ref{subsec:maxmart}, we obtain the complementary upper bound for $\limsup$ and lower bound for $\liminf$ via martingale estimates. Both
 parts of the proof rely on the following small deviation tail estimates for the largest eigenvalue of Wigner matrix. 
\begin{proposition}[Small deviation tail estimates] \label{prop:tails}
Fix any $K > 0$. Let $\lambda_1^{(N)}$ be the (shifted and scaled according to \eqref{eq:TWconv}) largest eigenvalue of a real symmetric ($\beta = 1$) or complex Hermitian ($\beta = 2$) Wigner matrix as in \eqref{eq:Xdef}--\eqref{eq:Hdef}. Then, we have the following: 
\begin{itemize}
\item[(a)] \textnormal{[Right tail]} For any $\epsilon > 0$ there exists a constant $C = C(K, \epsilon)$ such that for any $1 \le x \le K (\log N)^{2/3}$ we have
\begin{equation} \label{eq:righttail}
C^{-1} \ee^{- \frac{2 \beta}{3}(1+\epsilon) x^{3/2}} \le \Prob \left[\lambda_1^{(N)} \ge x\right] \le C \ee^{- \frac{2 \beta}{3}(1-\epsilon) x^{3/2}}
\end{equation} 
for any sufficiently large $N \ge N_0(K, \epsilon)$. 
\item[(b)] \textnormal{[Left tail]}  For  any $\epsilon > 0$ there exists a constant $C = C(K, \epsilon)$ such that for any $1 \le x \le K (\log N)^{1/3}$ we have
\begin{equation} \label{eq:lefttail}
C^{-1} \ee^{- \frac{ \beta}{24} (1 + \epsilon)x^{3}} \le 	\Prob \left[\lambda_1^{(N)} \le  - x\right] \le C \ee^{- \frac{ \beta}{24} (1 - \epsilon)x^{3}}
\end{equation} 
for all sufficiently large $N \ge N_0(K, \epsilon)$. 
\end{itemize}
\end{proposition}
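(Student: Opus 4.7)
The strategy, as outlined in the paragraph following the statement, is to first obtain the estimates for the Gaussian ensembles and then transfer them to a general Wigner matrix. For the Gaussian input I would directly invoke Baslingker~et.~al.~\cite{baslingker2024optimal}, which establishes precisely the two-sided moderate deviation bounds \eqref{eq:righttail} and \eqref{eq:lefttail} with the sharp constants $\tfrac{2\beta}{3}$ and $\tfrac{\beta}{24}$ for GOE ($\beta=1$) and GUE ($\beta=2$), in exactly the windows $1\le x\le K(\log N)^{2/3}$ and $1\le x\le K(\log N)^{1/3}$. This step is essentially a citation; no modification is needed since the windows and constants are already matched to our target.

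The substance of the argument is the transfer to general entry distributions via the Green function comparison scheme developed in \cite{small_dev}. Fix $x$ in the relevant window and let $E_0=2+xN^{-2/3}$ (resp.\ $E_0=2-xN^{-2/3}$ for the left tail). Following the standard edge GFT procedure, I would represent $\Prob[\lambda_1^{(N)}\ge x]=\Prob[\mathrm{Tr}\,\mathbbm{1}_{[E_0,\infty)}(H^{(N)})\ge 1]$ by smoothing the sharp indicator at a microscopic scale $\eta\sim N^{-2/3-\tau}$ for some small $\tau>0$, thereby obtaining $\Prob[\lambda_1^{(N)}\ge x]\approx \E F(H^{(N)})$ for a smooth functional $F$ built from the resolvent $G(z)=(H^{(N)}-z)^{-1}$ at spectral parameters near the upper edge. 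Monotone smoothing yields both upper and lower bounds on the tail by sandwiching. I would then run the Lindeberg swap: replace the entries of $H^{(N)}$ one by one with independent Gaussians of matching variance and conjugate symmetry, controlling the change in $\E F$ at each step by a Taylor expansion to fourth order. The second-order term vanishes by variance matching, while the third- and fourth-order remainders are estimated by optimal isotropic and averaged local laws at the edge. Summed over the $O(N^2)$ swaps the total error multiplies the Gaussian tail by at most an $N^{O(\tau)}$ factor, which is absorbable into the $(1\pm\epsilon)$ slack in the target exponents $x^{3/2}$ and $x^3$ once $\tau$ is chosen sufficiently small and $x\ge 1$. The left tail is treated identically, with the cutoff supported on $(-\infty, 2-xN^{-2/3}]$.

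The main obstacle is preserving the sharp exponent through the GFT in the moderate deviation regime: the Gaussian tail is as small as $\exp(-c(\log N)^{1+\gamma})$ at the far end of the window, so the cumulative Lindeberg correction must be only polynomial in $N$, with a constant that is uniform in $x$ across the entire window. This requires the edge local laws and rigidity estimates used to bound the third- and fourth-order derivatives of $F$ to remain effective not only in the bulk but all the way up to the spectral parameter $E_0=2+K(\log N)^{2/3}N^{-2/3}$, which sits well above the edge on the microscopic scale. These are precisely the ingredients supplied by \cite{small_dev}, so the adaptation is expected to be largely mechanical; the only regime-specific verification is that the windows provided by Baslingker~et.~al.~are wide enough to accommodate the $N^{o(1)}$ slack introduced by the swap, which they are since the prefactors in \eqref{eq:righttail}--\eqref{eq:lefttail} are allowed to depend on $K$ and $\epsilon$.
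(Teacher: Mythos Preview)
Your proposal is correct and takes essentially the same approach as the paper: cite \cite{baslingker2024optimal} for the sharp two-sided Gaussian tail bounds, then transfer to general Wigner matrices via the Green function comparison of \cite{small_dev}, exploiting that the GFT error is multiplicative (of size $N^{O(\tau)}$) rather than additive and hence absorbable into the $(1\pm\epsilon)$ slack. The paper's own proof is in fact more terse than yours; the only cosmetic difference is that \cite{small_dev} (and the paper's later Section~\ref{subsec:GFT}) phrase the comparison through a continuous OU flow with cumulant expansion rather than a discrete Lindeberg swap, but this is immaterial to the argument.
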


The precise exponents in \eqref{eq:righttail}--\eqref{eq:lefttail} coincide with the large deviation tail estimates
for the Tracy-Widom distribution proven by Ramirez, Rider, and Virág in \cite{ramirez2011beta} (following
a heuristics  in \cite{edelman2007random}).
However, the  distributional limit of $\lambda_1^{(N)}$ to  the Tracy-Widom law does not 
imply that the same tail bounds hold before the limit and for a large ($N$-dependent) range of $x$; these
facts needed separate proofs.

Historically, the right-tail estimate \eqref{eq:righttail} was firstly established for the GUE case in \cite[Lemma 7.3]{PZ_LIL}. This was followed by an upper bound on the right tail for the GOE case \eqref{eq:righttail} in \cite[Lemma 1.1]{small_dev}. In \cite[Theorem 1.2]{small_dev}, the upper bounds on both right and left tails were further extended to the Wigner case for both symmetry classes. Finally, \eqref{eq:righttail} and \eqref{eq:lefttail} were proven both for GOE and GUE cases in full generality in \cite{baslingker2024optimal}. These developments were preceded by a series of intermediate results; see, e.g., \cite{Ledoux07} for a review. Now we explain how to establish both \eqref{eq:righttail} and \eqref{eq:lefttail} for general Wigner matrices.

\begin{proof}[Proof of Proposition \ref{prop:tails}] Our approach relies on the Green function comparison method from \cite{small_dev} to compare the Wigner case with the Gaussian one from \cite{baslingker2024optimal}. In \cite{small_dev} the upper bounds on the tails in \eqref{eq:righttail} and a non-optimal version of \eqref{eq:lefttail} were used in the Gaussian case as an input, and then extended to the Wigner case by tracking events of polynomially small probability during the comparison process. Applying the same method with a complete set of bounds from \cite{baslingker2024optimal} as an input allows us to establish \eqref{eq:righttail} and \eqref{eq:lefttail} for Wigner matrices.
\end{proof}

\subsection{Decorrelation estimate: Lower bound for $\limsup$ and upper bound for $\liminf$} \label{subsec:decor}

In order to formulate our decorrelation estimate, for $N_1, N_2 \in \N$ and $x_1, x_2 \ge 0$, we introduce the tail events
\begin{equation} \label{eq:tailevents}
\mathfrak{F}_l(x_l) := \left\{ \lambda_{1}^{(N_l)} \ge x_l (\log N_l)^{2/3}\right\} \quad \text{and} \quad \mathfrak{E}_l(x_l) := \left\{ \lambda_{1}^{(N_l)} \le  - x_l (\log N_l)^{1/3}\right\} \quad \text{for} \ l = 1,2\,. 
\end{equation}
Then we have the following result, whose proof is given in Section \ref{app:decor}. 
\begin{proposition}[Decorrelation estimate on tail events] \label{prop:decor}
Fix $\epsilon_0 >0$ and take two large positive integers $N_1,  N_2 \in \N$ such that $N_2^{2/3 + \epsilon_0} \le N_2 -N_1 \le N_2^{1 - \epsilon_0}$. Fix a large constant $K > 0$. Then there exists a constant $\delta >0$ such that 
\begin{subequations} \label{eq:decor}
\begin{equation} \label{eq:rightdecor}
	\Prob \left[\mathfrak{F}_1(x_1) \cap \mathfrak{F}_2(x_2)\right] = \Prob\left[\mathfrak{F}_1(x_1)\right] \, \Prob\left[\mathfrak{F}_2(x_2)\right] \big(1 + \mathcal{O}(N_2^{-\delta})\big) 
\end{equation}
and 
\begin{equation} \label{eq:leftdecor}
		\Prob \left[\mathfrak{E}_1(x_1) \cap \mathfrak{E}_2(x_2)\right] = \Prob\left[\mathfrak{E}_1(x_1)\right] \, \Prob\left[\mathfrak{E}_2(x_2)\right] \big(1 + \mathcal{O}(N_2^{-\delta})\big) 
		\end{equation}
uniformly for $K^{-1} \le x_l \le K $, $l=1,2$. 
\end{subequations}
\end{proposition}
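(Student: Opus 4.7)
The plan is to combine a Green function comparison theorem (GFT) reduction to the Gaussian case with a Dyson Brownian Motion (DBM) decorrelation analysis, strengthening the strategy from our earlier work \cite{minor} to cover the moderate deviation regime of the tail events in \eqref{eq:tailevents}. Since \eqref{eq:rightdecor} and \eqref{eq:leftdecor} are structurally analogous, I would treat both simultaneously and focus on the right tail events $\mathfrak{F}_l(x_l)$.

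First, I would use GFT to reduce the joint statistics of $(\lambda_1^{(N_1)}, \lambda_1^{(N_2)})$ from a general Wigner minor process to the corresponding Gaussian (GOE or GUE) minor process. Since the events $\mathfrak{F}_l(x_l), \mathfrak{E}_l(x_l)$ are polynomially rare, the refined variant of GFT from \cite{small_dev} that tracks polynomially small probability events is required; this uses the tail bounds from Proposition \ref{prop:tails} as an a priori input. Next, in the Gaussian case, I would exploit that, conditional on $H^{(N_1)}$, the spectrum of $H^{(N_2)}$ evolves by a matrix-valued Brownian flow of effective time $t \sim (N_2 - N_1)/N_2$. The hypothesis $N_2 - N_1 \ge N_2^{2/3+\epsilon_0}$ ensures $t \gg N_2^{-1/3}$, comfortably exceeding the edge relaxation scale, so short-time edge universality results from \cite{landon2017edge, Bou_extreme} imply that $\lambda_1^{(N_2)}$ loses memory of its initial condition except through macroscopic (deterministic) data.

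To make this decorrelation quantitative at the required polynomial rate, I would combine the DBM analysis with eigenvector overlap bounds extracted from multi-resolvent local laws \cite{eigenv_decorr}, following the scheme developed in \cite{macroCLT_complex} and further refined in \cite{minor, bourgade2024fluctuations, cipolloni2023quenched, spectral_radius}. In this scheme, the residual correlation between the extreme eigenvalues of the two minors is governed by weighted overlaps between the corresponding top eigenvectors and the additional rows/columns of the larger minor; the multi-resolvent local laws provide the optimal decay of these overlaps at the edge. The upper bound $N_2 - N_1 \le N_2^{1 - \epsilon_0}$ ensures that the bulk and edge geometry of the two minors remain close enough that these overlap estimates can indeed be transferred into the Tracy-Widom window.

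The main obstacle I expect is upgrading the additive $O(N^{-\delta})$ decorrelation bound that suffices in the typical regime treated in \cite{minor} to a \emph{multiplicative} error $1 + O(N_2^{-\delta})$ on the product of tail probabilities, which are themselves only polynomially small of size $N^{-C(K)}$. Crucially, since $x_l \in [K^{-1}, K]$ lies in a bounded interval, the relevant eigenvalues remain in the genuine edge regime, so the local law and DBM machinery apply throughout. Combining the sharp two-sided tail bounds from Proposition \ref{prop:tails} with the DBM argument run at the sharpened precision (truncating out polynomially rare bad configurations via these same bounds) allows the additive errors to be absorbed into the polynomially small targets, which yields the multiplicative rate claimed in \eqref{eq:decor}.
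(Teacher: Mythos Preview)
Your proposal identifies the right ingredients and references, and your final paragraph correctly anticipates how the additive $\mathcal O(N_2^{-D})$ errors are absorbed into the polynomially small tail probabilities. However, the organization differs from the paper's, and your second paragraph contains a conceptual gap.

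The paper does not first GFT all the way to the Gaussian minor process. Instead it runs an Ornstein--Uhlenbeck flow for a \emph{short} auxiliary time $t\ge N_2^{-1/3+\omega}$ to make both $H^{(N_1)}$ and $H^{(N_2)}$ Gaussian-divisible, proves decorrelation for this ensemble (Proposition~\ref{prop:DBM}), and only then uses a short-time GFT (Proposition~\ref{prop:GFT}) to remove that small Gaussian component. More importantly, your description of the DBM mechanism is not correct: the passage from $H^{(N_1)}$ to $H^{(N_2)}$ by appending rows and columns is \emph{not} a matrix Brownian flow, so there is no ``effective time $t\sim(N_2-N_1)/N_2$'' to which \cite{landon2017edge, Bou_extreme} applies conditionally on $H^{(N_1)}$. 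The actual argument, which you allude to in your third paragraph, runs DBM in the auxiliary OU time $t$ (unrelated to $N_2-N_1$) \emph{simultaneously} on both minors; the driving Brownian motions of the two eigenvalue flows have covariation $|\langle \bm w_i^{(N_1)}(t), \bm w_j^{(N_2)}(t)\rangle|^2\,\dif t$, and the hypothesis $N_2-N_1\ge N_2^{2/3+\epsilon_0}$ enters \emph{only} through the eigenvector overlap bound $|\langle \bm w_i^{(N_1)}, \bm w_j^{(N_2)}\rangle|\le N^{-\omega_E}$ from \cite[Proposition~3.1]{minor}. This small covariation allows coupling both flows to genuinely independent DBMs at precision $N_2^{-2/3-\omega}$ (see \eqref{eq:DBMminor}), after which the multiplicative decorrelation follows by transferring the tail events through the coupling.
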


Armed with Proposition \ref{prop:decor}, we now turn to proving a lower bound for $\limsup$ and an upper bound for $\liminf$ in Sections \ref{subsubsec:lowlimsup} and \ref{subsubsec:upplimsup}, respectively.

\subsubsection{Lower bound for $\limsup$} \label{subsubsec:lowlimsup}In this section, we prove that almost surely
\begin{align}
	\limsup_{N\to \infty} \frac{\lambda_1^{(N)}}{(\log N)^{2/3}}\geq c_*, \qquad \text{with} \qquad  c_*:=\left(\frac{1}{2\beta}\right)^{2/3}.  \label{031737}
\end{align}
That means we shall show that for any $c<c^*$, almost surely, $\lambda_1^{(N)}\geq c(\log N)^{2/3}$ occurs infinitely many times. It suffices to find a subsequence so that it is true. To this end, we fix an $\alpha>3$ to be determined below, and choose the 
subsequence\footnote{With a slight abuse of notations, $N_1$ and $N_2$ used in Proposition~\ref{prop:decor}
are not exactly the same as $N_k$ defined here, in fact we will use this proposition in~\eqref{varest}
for pairs $N_i, N_j$ for some $i,j$.}
\begin{align}
	N_k=\lceil k^\alpha\rceil. \label{031740}
\end{align}
For  a fixed $0 < c<c_*$, we define (analogously to \eqref{eq:tailevents}) the event 
\begin{align*}
\mathfrak{F}_k = 	\mathfrak{F}_k(c):=\Big\{ \lambda_1^{(N_k)}\geq c(\log N_k)^{2/3} \Big\}
\end{align*} 
and, for $M\in \N$, the partial sum
\begin{align*}
	S_M:=\sum_{k=1}^M\mathbf{1}(\mathfrak{F}_k). 
\end{align*}
To prove \eqref{031737}, it suffices to show that we can choose $\alpha=\alpha(c)>3$, such that $S_M\to \infty$ almost surely as $M\to \infty$. Since $S_M$ is nondecreasing in $M$, it is enough to show that $S_M\to \infty$ in probability.  
Let $\delta=\delta(c)>0$ be a sufficiently small constant. We define 
\begin{align} \label{eq:SMdelta}
	S_{M,\delta}:=\sum_{k=M-M^{1-\delta}}^M \mathbf{1}(\mathfrak{F}_k)\leq S_M.
\end{align}
Then it suffices to show $ S_{M,\delta}\to \infty$ in probability as $M\to \infty$. 
To this end, it is enough to prove 
\begin{align}
	\text{(i)}: \E S_{M,\delta}\to \infty, \qquad \qquad \text{(ii)}: \frac{ S_{M,\delta}}{\E S_{M,\delta}}  
	\longrightarrow 1 \quad  \text{in probability as} \quad   M\to \infty.  \label{031720}
\end{align}

In order to prove \eqref{031720} (i), we recall \eqref{eq:righttail}, and see that, for any $\epsilon >0$, 
\begin{align*}
	\E S_{M,\delta}=\sum_{k=M-M^{1-\delta}}^M \Prob[\mathfrak{F}_k] &\gtrsim \sum_{k=M-M^{1-\delta}}^M N_k^{-\frac{2\beta}{3} (1+ \epsilon)c^{3/2}}  \\
	&\sim \sum_{k=M-M^{1-\delta}}^M k^{-\alpha \frac{2\beta}{3} (1 + \epsilon)c^{3/2}}\sim M^{1-\alpha\frac{2\beta}{3}c^{3/2}(1 + \epsilon)-\delta} \,. 
\end{align*} 
This diverges, $\E S_{M, \delta} \to \infty$ as $M \to \infty$, for any $c<c_*$, if we choose $\alpha=\alpha(c)$ to be sufficiently close to $3$ and $\delta=\delta(c)$ and $\epsilon = \epsilon(c)$ sufficiently close to $0$.  Hence, we have shown \eqref{031720}. 

We now prove \eqref{031720} (ii), for which it suffices to show 
\begin{align}
	\frac{ S_{M,\delta}-\E S_{M,\delta}}{\E  S_{M,\delta}}
		\longrightarrow 0, \quad \text{in probability as} \quad M\to \infty.  \label{031736}
\end{align}
To establish \eqref{031736}, we estimate the variance of $S_{M, \delta}$ from \eqref{eq:SMdelta} as
\begin{align}
	\mathrm{Var} \big( S_{M,\delta}\big)
		\leq \E S_{M,\delta}+2 \hspace{-2mm}\sum_{i=M-M^{1-\delta}}^M \sum_{ j=i+1}^{M} \left|\Prob \left[ \mathfrak{F}_i\cap\mathfrak{F}_j\right]-\Prob \left[ \mathfrak{F}_i\right] \Prob\left[\mathfrak{F}_j\right]\right|.  \label{031730}
\end{align}
Note that for the matrix sizes $N_i, N_j$ in the above double sum, we always have 
\begin{align*}
	N_j^{\frac{\alpha-1}{\alpha}}\lesssim N_j-N_i \lesssim N_j^{\frac{\alpha-\delta}{\alpha}} 
\end{align*}
and thus, since $\alpha>3$, 
\begin{align*}
	N_j^{\frac{2}{3}+\varepsilon_0}\lesssim N_j-N_i \lesssim N_j^{1-\varepsilon_0}.
\end{align*}
Hence, we can apply the decorrelation estimate \eqref{eq:rightdecor}, which, plugged into \eqref{031730} yields 
\begin{align}\label{varest}
	\text{Var}( S_{M,\delta}) \leq \E[ S_{M,\delta}]+CN_M^{-\varepsilon} \sum_{i=M-M^{1-\delta}}^M \sum_{ j>i}^{M} \Prob \big[ \mathfrak{F}_i\big] \Prob\big[\mathfrak{F}_j\big].
\end{align}
Both terms in the rhs.~are much smaller than $(\E[ S_{M,\delta}])^2$ (by an inverse power of $N_M$) and we thus conclude \eqref{031736} by  Chebyshev's inequality. This completes  the proof of \eqref{031737} by the arbitrariness of $c<c_*$.

\subsubsection{Upper bound for $\liminf$}  In this section, we prove that almost surely
\begin{equation} \label{eq:uppliminf}
	\liminf_{N\to \infty} \frac{\lambda_1^{(N)}}{(\log N)^{1/3}}\leq -c_{**}, \quad \text{with} \quad c_{**}:=\left(\frac{8}{\beta}\right)^{1/3}.
\end{equation}
The argument is similar to \eqref{031737} and we will hence be quite brief. To establish \eqref{eq:uppliminf}, we shall show that for any $c<c_{**}$, almost surely, ${\lambda_1^{(N)}}/{(\log N)^{1/3}}\leq -c$ occurs infinitely many times. Again, it suffices to show that this is true for the subsequence $N_k$ defined in \eqref{031740} with some $\alpha=\alpha(c)$ to be determined below. 

Similarly to Section \ref{subsubsec:lowlimsup}, for a fixed $0 < c<c_*$, we define (analogously to \eqref{eq:tailevents}) the event 
\begin{align*}
\mathfrak{E}_k = \mathfrak{E}_k(c):=\Big\{ \lambda_1^{(N_k)}\leq -c(\log N_k)^{1/3} \Big\}
\end{align*} 
and the partial sums 
\begin{align*}
	S_M:=\sum_{k=1}^M\mathbf{1}({\mathfrak{E}}_k), \qquad  S_{M,\delta}:=\sum_{k=M-M^{1-\delta}}^M\mathbf{1}({\mathfrak{E}}_k).
\end{align*}
Again, the task boils down to show that $S_{M,\delta}\to \infty$ in probability. Analogously to \eqref{031720}, this can be reduced to (i) a lower bound on the expectation, and (ii) an upper bound on the variance. Now, for the upper bound on the $\liminf$, we need the left tail estimate \eqref{eq:lefttail} and the left tail decorrelation estimate \eqref{eq:leftdecor}. The rest of the proof is completely analogous to Section \ref{subsubsec:lowlimsup} and thus omitted.

\subsection{Martingale inequalities: Upper bound for $\limsup$ and lower bound for $\liminf$} \label{subsec:maxmart}
The principal strategy in this part of the proof is as follows: We first establish the desired estimates (upper bound for $\limsup$ and lower bound for $\liminf$) for a suitable subsequence of $\lambda_1^{(N_k)}$'s with the aid of the tail probability estimates from Proposition \ref{prop:tails}. Afterwards, we extend these bounds to the whole sequence of $\lambda_1^{(N)}$'s by expressing the difference of $\lambda_1$'s as martingales and using suitable martingale inequalities. 
\subsubsection{Upper bound for $\limsup$} \label{subsubsec:upplimsup}
In this section, we show that, almost surely, 
\begin{align}
	\limsup_{N\to\infty} \frac{\lambda_1^{(N)}}{(\log N)^{2/3}}\leq c_*, \quad \text{with} \quad c_*:=\left(\frac{1}{2\beta}\right)^{2/3}.   \label{031102}
\end{align}
That means, we shall show that for any fixed $c>c^*$, almost surely, $\lambda_1^{(N)}\geq c(\log N)^{2/3}$ occurs only finitely many times.

\smallskip

\emph{Step 1: Upper bound for a subsequence.} Let $\alpha<3$ to be chosen and we define\footnote{Owing to the different exponent, this sequence $N_k$ is slightly different from the one defined in~\eqref{031740}, but
it plays the same role.}
\begin{align}
	N_k=\lceil k^\alpha\rceil. \label{subseq}
\end{align} 
By the one point tail probability estimate \eqref{eq:righttail}, we have 
\begin{align*}
	\Prob\big[\lambda_1^{(N_k)}\geq c(\log N_k)^{2/3}\big]\lesssim N_k^{-\frac{2\beta}{3} (1-\epsilon)c^{3/2}}\sim k^{-\alpha \frac{2\beta}{3}(1-\epsilon)c^{3/2}}
\end{align*}
for any $\epsilon > 0$. 
As $c>c_*$, we can choose $\alpha=\alpha(c)$ to be sufficiently close to $3$, so that 
\begin{align*}
	\sum_k\Prob\big[\lambda_1^{(N_k)}\geq c(\log N_k)^{2/3}\big]\lesssim \sum_k  k^{-\alpha \frac{2\beta}{3}(1-\epsilon)c^{3/2}}<\infty
\end{align*}
when $\epsilon$ is taken sufficiently small. 
By the Borel-Cantelli lemma, we infer that, almost surely,
\begin{align}
	\limsup_{k \to\infty} \frac{\lambda_1^{(N_k)}}{(\log N_k)^{2/3}}\leq c.  \label{031101}
\end{align}
\newline
\emph{Step 2: Extension to the whole sequence.} Next, we extend the conclusion from the subsequence $\{N_k\}$ to the whole $\N$. To this end, for $k \in \N$ and $\delta > 0$, we define
\begin{align}
	\mathcal{E}_k(\delta):=\Big\{ \exists n \in [ N_{k-1}+1, N_k ]: \lambda_1^{(n)}\geq (c+\delta)(\log n)^{2/3}\Big\}\bigcap \Big\{\lambda_1^{(N_k)}\leq c(\log N_k)^{2/3}\Big\}. \label{031113}
\end{align}
Our claim will follow from the following lemma, whose proof is presented below. 
\begin{lemma}[Extension lemma] \label{lem:ext}
For any $\delta>0$ it holds that
\begin{align}
	\sum_{k=1}^{\infty}\Prob\big[ \mathcal{E}_k(\delta)\big]<\infty.  \label{031103}
\end{align}
\end{lemma}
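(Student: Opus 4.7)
The plan is to reduce the event $\mathcal E_k(\delta)$ to a moderate deviation of a natural martingale produced by the Schur complement formula, and then control its probability via Doob's maximal inequality with high $L^p$ moments so that $\sum_k \Prob[\mathcal E_k(\delta)]<\infty$. Write $\mu_1^{(n)}$ for the largest eigenvalue of the unscaled matrix $X^{(n)}$, so that $\lambda_1^{(n)}=n^{1/6}\nu_n$ with $\nu_n:=\mu_1^{(n)}-2\sqrt n$. Since $N_k-N_{k-1}\sim N_k^{(\alpha-1)/\alpha}\ll N_k$ (as $\alpha<3$), the ratios $(n/N_k)^{1/6}$ and $\log n/\log N_k$ are $1-o(1)$ uniformly in $n\in[N_{k-1}+1,N_k]$; hence on $\mathcal E_k(\delta)$ for all large $k$ there exists such an $n$ with
\[
\nu_n-\nu_{N_k}\ge \tfrac{\delta}{2}\,N_k^{-1/6}(\log N_k)^{2/3}.
\]

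Next I would pass to a martingale. When the $m$-th row and column of $X$ are appended, the secular equation yields $\mu_1^{(m)}-\mu_1^{(m-1)}=\xi_m\ge 0$, where $\xi_m$ is an explicit function of the top eigenvector $\bm u_1^{(m-1)}$ of $X^{(m-1)}$ and the new column $\bm x_m$; to leading order $\xi_m\approx |\langle \bm u_1^{(m-1)},\bm x_m\rangle|^2/\sqrt{m-1}$. Let $\mathcal F_m$ be the $\sigma$-algebra generated by $X^{(m)}$ and split $\xi_m=\bar\xi_m+\Delta_m$ into the predictable part $\bar\xi_m:=\E[\xi_m\,|\,\mathcal F_{m-1}]$ and a martingale difference $\Delta_m$. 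Since $\E[|\langle \bm u,\bm x_m\rangle|^2\,|\,\mathcal F_{m-1}]=1$ for any $\mathcal F_{m-1}$-measurable unit vector $\bm u$, the edge local law together with eigenvector delocalization gives the sharp expansion
\[
\sum_{m=n+1}^{N_k}\bar\xi_m = 2\bigl(\sqrt{N_k}-\sqrt n\,\bigr)+\mathcal O_\prec\bigl(N_k^{-1/6-c}\bigr)
\]
for some $c>0$, uniformly in $n\in[N_{k-1}+1,N_k]$. Setting $M_r:=\sum_{m=N_{k-1}+1}^{r}\Delta_m$, this translates, via $\nu_n=\mu_1^{(n)}-2\sqrt n$, into $\nu_{N_k}-\nu_n = M_{N_k}-M_n+\mathcal O_\prec(N_k^{-1/6-c})$. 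Hence on $\mathcal E_k(\delta)$ for large $k$ one must have $\max_{N_{k-1}\le r\le N_k}|M_r|\ge \tfrac{\delta}{8}N_k^{-1/6}(\log N_k)^{2/3}$ with very high probability.

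Finally I would apply the maximal inequality. The finite-moment assumption \eqref{eq:Xdefmoments} combined with $\mu_1^{(m-1)}\sim\sqrt{N_k}$ and standard concentration of quadratic forms yields $\E[|\Delta_m|^p\,|\,\mathcal F_{m-1}]\lesssim_p N_k^{-p/2}$ for every $p$, and the predictable quadratic variation satisfies $\langle M\rangle_{N_k}\lesssim (N_k-N_{k-1})/N_k$. Doob's $L^p$ maximal inequality combined with the Burkholder--Rosenthal inequality then give, for integer $p\ge 2$,
\[
\Prob\!\Bigl[\max_r|M_r|\ge a\Bigr]\lesssim_p \frac{\bigl((N_k-N_{k-1})/N_k\bigr)^{p/2}+(N_k-N_{k-1})N_k^{-p/2}}{a^p}.
\]
Setting $a=c_\delta N_k^{-1/6}(\log N_k)^{2/3}$ and using $N_k-N_{k-1}\sim N_k^{(\alpha-1)/\alpha}$, a direct computation shows the right-hand side is $\lesssim_p k^{p(\alpha-3)/6}(\log k)^{-2p/3}$, which is summable in $k$ as soon as $p>6/(3-\alpha)$; this together with the w.v.h.p.\ nature of the auxiliary events yields \eqref{031103}.

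The main technical obstacle will be the sharp expansion of $\sum\bar\xi_m$ to sub-leading precision $o\bigl(N_k^{-1/6}(\log N_k)^{2/3}\bigr)$, i.e.\ a quantitative edge estimate of the resolvent quadratic form $\bm x^*(\lambda I-X^{(m-1)})^{-1}\bm x$ for $\lambda$ close to $\mu_1^{(m-1)}$, uniformly in $m\in[N_{k-1}+1,N_k]$. This should follow from the edge local law together with eigenvector delocalization for Wigner matrices at a spectral scale slightly above the typical eigenvalue spacing.
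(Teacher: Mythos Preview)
Your proposal follows the same overall strategy as the paper: express the eigenvalue increment via the Schur complement (secular equation), isolate a martingale, and bound $\Prob[\mathcal E_k(\delta)]$ via Doob's $L^p$ maximal inequality with large $p$. The difference lies in the execution. You aim for an \emph{equality} $\nu_{N_k}-\nu_n = M_{N_k}-M_n+\mathcal O_\prec(N_k^{-1/6-c})$, which forces you to compute the predictable part $\sum_m\bar\xi_m$ to sub-leading precision; this is exactly the obstacle you flag at the end, and it is genuine, since $\xi_m=\mu_1^{(m)}-\mu_1^{(m-1)}$ depends on the new column only implicitly through the secular equation. The paper sidesteps this entirely by noting that only a \emph{lower bound} on $\lambda_1^{(N_k)}-\lambda_1^{(n)}$ is needed to control $\mathcal E_k(\delta)$: dropping the (positive, by interlacing) contributions of the sub-leading eigenvectors in the secular equation yields directly
\[
\widetilde\lambda_1^{(n)}-\widetilde\lambda_1^{(n-1)}\ \ge\ \tfrac{1}{n}\bigl(|\xi_1^{(n)}|^2-1\bigr)+\mathcal O_\prec\bigl(n^{-4/3+C\xi}\bigr),
\]
where $\xi_1^{(n)}=\sqrt n\,(\bm w_1^{(n-1)})^*\bm a^{(n)}$ is the projection of the new column onto the top eigenvector. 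Since $\E[|\xi_1^{(n)}|^2\mid\mathcal F_{n-1}]=1$ \emph{exactly}, the resulting martingale increment is explicit and the predictable part vanishes identically---no sharp expansion is required. Your route is workable but strictly harder; the one-sided inequality is the key simplification that makes the paper's argument short and elementary.
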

Indeed, Lemma \ref{lem:ext} together with the Borel-Cantelli lemma shows that, almost surely, only finitely many of the events $\mathcal{E}_k(\delta)$ can occur. Together with \eqref{031101}, we will have almost surely
\begin{align*}
	\limsup_{N \to\infty} \frac{\lambda_1^{(N)}}{(\log N)^{2/3}}\leq c+\delta.
\end{align*}
Due to the arbitrariness of $\delta>0$ and $c>c_*$, we conclude \eqref{031102} and are hence left with proving Lemma~\ref{lem:ext}. 
\begin{proof}[Proof of Lemma \ref{lem:ext}] 
Note that, since $\alpha<3$, there exists an $\varepsilon=\varepsilon(\alpha)$ such that 
\begin{align}
	0<N_{k}-N_{k-1}\leq N_k^{\frac23-\varepsilon}.  \label{031110}
\end{align}
\emph{Part 1: Lower bound on eigenvalue differences.} In the sequel, let $n\in [N_{k-1}+1, N_k]$. By eigenvalue rigidity \cite[Theorem 7.6]{loc_sc_gen}, we have (recall that $\widetilde{\lambda}_1^{(n)}$ is the largest eigenvalue of $H^{(n)}$ from \eqref{eq:Hdef})
\begin{align}
	\widetilde{\lambda}_1^{(n)}-\widetilde{\lambda}_1^{(n-1)}= \widetilde{\lambda}_1^{(n)}-\sqrt{\frac{n-1}{n}} \widetilde{\lambda}_1^{(n-1)}-\frac{1}{n}(1+\mathcal{O}_\prec(n^{-2/3})) \label{030801}
\end{align}
We wrote the difference in this form since the spectra of $H^{(n)}$ and $\sqrt{(n-1)/n}H^{(n-1)}$ are interlacing
 since the latter is a minor of the former, in fact \nc
for any $n\geq 2$, we can write 
\begin{align*}
	H^{(n)}=\left( \begin{array}{cc}
		\sqrt{\frac{n-1}{n}}H^{(n-1)}  & \bm{a}^{(n)}\\ \\
		(\bm{a}^{(n)})^*   & h_{nn}^{(n)}
	\end{array}\right).
\end{align*}
Note that here we rewrite the $n$-th diagonal entry of $H$ as $h_{nn}^{(n)}$.  Further, we denote the 
normalized  eigenvector associated with $\widetilde{\lambda}_1^{(n)}$ by $\bm{w}^{(n)}_1$ and we write
\begin{align}
	\bm{w}^{(n)}_1=\left(
	\begin{array}{c}
			\bm{v}^{(n)}_1 \\   \\
	u^{(n)}_1
	\end{array}
	\right), \label{19071010}
\end{align}
where $u^{(n)}_1$ is the $n$-th component of $\bm{w}^{(n)}_1$.  From the eigenvalue equation $H^{(n)} \bm{w}^{(n)}_1= \widetilde{\lambda}^{(n)}_1 \bm{w}^{(n)}_1$, 
we get 
\begin{equation} \label{19070901}
	\begin{split}
		&h^{(n)}_{nn} u^{(n)}_1+ (\bm{a}^{(n)})^* \bm{v}^{(n)}_1= \widetilde{\lambda}^{(n)}_1 u^{(n)}_1,\\
		& u^{(n)}_1 \bm{a}^{(n)}+ \sqrt{\frac{n-1}{n}}H^{(n-1)} \bm{v}^{(n)}_1= \widetilde{\lambda}^{(n)}_1 \bm{v}^{(n)}_1.  
	\end{split}
\end{equation}
By simple algebra, similarly, for example, to \cite[Eqs.~(7.3)-(7.5)]{minor}, we find
\begin{equation} \label{eq:basicrep}
	\widetilde{\lambda}_1^{(n)}=h_{nn}^{(n)}+\frac{1}{n}\sum_{\alpha=1}^{n-1}\frac{|\xi_\alpha^{(n)}|^2}{\widetilde{\lambda}_1^{(n)}-\sqrt{\frac{n-1}{n}}\widetilde{\lambda}_\alpha^{(n-1)}},
\end{equation}
where 
\begin{align}
	\xi_\alpha^{(n)}:= \sqrt{n}(\bm{w}^{(n-1)}_\alpha)^*\bm{a}^{(n)} \label{def of xi}
\end{align}
and $\bm{w}^{(n-1)}_\alpha$ is the $\ell^2$-normalized eigenvector of $H^{(n-1)}$ corresponding to the eigenvalue $\widetilde{\lambda}_\alpha^{(n-1)}$.  Note that $|\xi_{\alpha}^{(n)}|\prec 1$,
using the moment bounds \eqref{eq:Xdefmoments}  on the matrix elements,
and  $\mathbf{E} [ |\xi_{\alpha}^{(n)}|^2 \; | \mathcal{F}_{n-1}]=1$, where
 $ \mathcal{F}_m$ is the sigma field generated by the entries of $H^{(m)}$.
Actually, using the delocalisation of the eigenvectors $\| \bm{w}^{(n-1)}_\alpha\|_\infty\prec
N^{-1/2}$, 
one can easily see that $|\xi_{\alpha}^{(n)}|^2$ is asymptotically
$\chi^2$-distributed, more precisely it is
$\chi^2(2)/2$ (complex case) or $\chi^2(1)$ (real case), however we will not need this precise information.
\nc

Together with eigenvalue rigidity \cite[Theorem 7.6]{loc_sc_gen}, \eqref{eq:basicrep} implies 
\begin{equation}\label{mainid}
	\begin{split}
	2 &=\frac{1}{n}\frac{|\xi_{\alpha}^{(n)}|^2}{\widetilde{\lambda}_1^{(n)}-\sqrt{\frac{n-1}{n}} \widetilde{\lambda}_1^{(n-1)}}+\frac{1}{n}\sum_{\alpha= 2}^{n-1}\frac{|\xi_\alpha^{(n)}|^2}{\widetilde{\lambda}_1^{(n)}-\sqrt{\frac{n-1}{n}}\widetilde{\lambda}_\alpha^{(n-1)}}+\mathcal{O}_\prec(n^{-1/2})\\
&\geq \frac{1}{n}\frac{|\xi_{1}^{(n)}|^2}{\widetilde{\lambda}_1^{(n)}-\sqrt{\frac{n-1}{n}} \widetilde{\lambda}_1^{(n-1)}}+\frac{1}{n}\sum_{\alpha= \lfloor N^{\xi}\rfloor}^{n-1}\frac{|\xi_\alpha^{(n)}|^2}{\widetilde{\lambda}_1^{(n)}-\sqrt{\frac{n-1}{n}}\widetilde{\lambda}_\alpha^{(n-1)}}+\mathcal{O}_\prec(n^{-1/2})\\
&= \frac{1}{n}\frac{|\xi_{1}^{(n)}|^2}{\widetilde{\lambda}_1^{(n)}-\sqrt{\frac{n-1}{n}} \widetilde{\lambda}_1^{(n-1)}}+1+\mathcal{O}_\prec(n^{-\frac13+C\xi})
	\end{split}
\end{equation}
for some sufficiently small  $\xi>0$. Here in the last step we used the facts 
\begin{align} \label{eq:mainid2}
\frac{1}{n}\sum_{\alpha= \lfloor N^{\xi}\rfloor}^{n-1}\frac{|\xi_\alpha^{(n)}|^2}{\widetilde{\lambda}_1^{(n)}-\sqrt{\frac{n-1}{n}}\widetilde{\lambda}_\alpha^{(n-1)}}
+\frac{1}{n}\sum_{\alpha= 1}^{n-1}\frac{|\xi_\alpha^{(n)}|^2}{\sqrt{\frac{n-1}{n}}\widetilde{\lambda}_\alpha^{(n-1)}-2-\ii n^{-2/3+\xi}}=O_\prec(n^{-\frac13+C\xi})
\end{align} 
and 
\begin{align} \label{eq:mainid3}
&\frac{1}{n}\sum_{\alpha= 1}^{n-1}\frac{|\xi_\alpha^{(n)}|^2}{\sqrt{\frac{n-1}{n}}\widetilde{\lambda}_\alpha^{(n-1)}-2-\ii n^{-\frac23+\xi}} =-\Big[G_{nn}^{(n)}\big(2+\ii n^{-\frac23+\xi}\big)\Big]^{-1}+h_{nn}^{(n)}-2-\ii n^{-\frac23+\xi}\notag\\
&\qquad=-\big[m_{\rm sc}(2+\ii n^{-\frac23+\xi})\big]^{-1}-2+O_\prec(n^{-\frac13+C\xi})=-1+O_\prec(n^{-\frac13+C\xi}),
\end{align}
which follow from rigidity (\cite[Theorem 7.6]{loc_sc_gen}) and local law for the entries of Green function $G^{(n)}(z)=(H^{(n)}-z)^{-1}$ (\cite[Theorem 6.7]{erdHos2017dynamical}).
From this we conclude
\begin{align*}
	\widetilde{\lambda}_1^{(n)}-\sqrt{\frac{n-1}{n}} \widetilde{\lambda}_1^{n-1}\geq \frac{1}{n} |\xi_1^{(n)}|^2\Big(1+\mathcal{O}_\prec(n^{-1/3+C\xi})\Big).
\end{align*}
Plugging it into (\ref{030801}) we obtain 
\begin{align}
	\widetilde{\lambda}_1^{(n)}-\widetilde{\lambda}_1^{(n-1)}\geq \frac{1}{n} \Big(|\xi_1^{(n)}|^2-1\Big)+\mathcal{O}_\prec(n^{-4/3+C\xi}).  \label{031710}
\end{align}
Summing up this relation, together with (\ref{031110}),  implies that for any $n\in [N_{k-1}+1, N_k]$, 
\begin{align}\label{summ}
	\widetilde{\lambda}_1^{(N_k)}-\widetilde{\lambda}_1^{(n)}\geq \sum_{\ell=n+1}^{N_k} \frac{1}{\ell } \Big(|\xi_1^{(\ell )}|^2-1\Big)+\mathcal{O}_\prec(N_{k}^{-2/3-\varepsilon/2})
\end{align}
if we choose $\xi$ sufficiently small. 
Then, using the shifting and rescaling from \eqref{eq:TWconv}, and the above lower bound, we have 
\begin{align}\label{diff}
	\lambda_1^{(N_k)}-\lambda_1^{(n)} &= N_k^{2/3}(\widetilde{\lambda}_1^{(N_k)}-2)- n^{2/3}(\widetilde{\lambda}_1^{(n)}-2)=N_{k}^{2/3}(\widetilde{\lambda}_1^{(N_k)}-\widetilde{\lambda}_1^{(n)})+\mathcal{O}_\prec(N_k^{-1/3-\varepsilon})\notag\\
	&\geq N_{k}^{2/3}\sum_{\ell=n+1}^{N_k} \frac{1}{\ell } \Big(|\xi_1^{(\ell )}|^2-1\Big)+\mathcal{O}_\prec(N_{k}^{-\varepsilon/2}) .
\end{align}
 Defining
\begin{equation} \label{eq:martdef}
	X_{k, n}:=N_{k}^{2/3}\sum_{\ell=N_{k-1}+1}^{n} \frac{1}{\ell } \Big(|\xi_1^{(\ell )}|^2-1\Big),
\end{equation}
we notice that $\{X_{k,N_{k-1}+1},\ldots, X_{k,N_k}\}$ is a martingale adapted to the filtration $\{\mathcal{F}_{N_{k-1}}, \ldots \mathcal{F}_{N_k-1}\}$, where recall that   $\mathcal{F}_m$ is the $\sigma$-field generated by the entries of $H^{(m)}$. 
We can summarize the estimates above in the form that
\begin{equation}\label{mart}
\lambda_1^{(N_k)}-\lambda_1^{(n)} \ge  X_{k,N_k}-X_{k,n} +\mathcal{O}_\prec(N_{k}^{-\varepsilon/2}),
\end{equation}
i.e. we can estimate $\lambda_1^{(N_k)}-\lambda_1^{(n)}$ by a martingale difference up to negligible error. 
\newline
\emph{Part 2: Bound on $\Prob[\mathcal{E}_k(\delta)]$ by martingale inequalities.} By the definition of $\mathcal{E}_k(\delta)$ in (\ref{031113}) and (\ref{mart}), we have 
\begin{equation}\label{031120}
	\begin{split}
	\mathbf{P}\big[\mathcal{E}_k(\delta)\big] &\leq \mathbf{P}\Big[\min_{n\in [N_{k-1}+1, N_k]} (\lambda_1^{(N_k)}-\lambda_1^{(n)})\leq -\frac{\delta}{2}(\log N_k)^{\frac23}\Big]\\
&\leq \mathbf{P}\Big[ \max_{n\in [N_{k-1}+1, N_k]} \Big|X_{k,N_k}-X_{k,n}\Big|\geq \frac{\delta}{3}(\log N_k)^{\frac23} \Big]+\mathcal{O}(N_k^{-D})\\
&\leq  \mathbf{P}\Big[  \big|X_{k,N_k}\big|\geq \frac{\delta}{6}(\log N_k)^{\frac23} \Big]+\mathbf{P}\Big[ \max_{n\in [N_{k-1}+1, N_k]} \big|X_{k,n}\big|\geq \frac{\delta}{6}(\log N_k)^{\frac23} \Big]+\mathcal{O}(N_k^{-D})\\[1mm]
&\leq \left(1+\Big(\frac{p}{p-1}\Big)^p\right)\left(\frac{\delta}{6}(\log N_k)^{\frac23}\right)^{-p} \mathbf{E} |X_{k,N_k}|^p+\mathcal{O}(N_k^{-D})
	\end{split}
\end{equation}
for any $D> 0$ and any $p>1$, by applying Markov's inequality and the standard  $L^p$ maximal inequality for submartingales.
  Then, by the   moment inequality of martingales \cite{dharmadhikari1968bounds} 
   and using \eqref{030801}, for any $p\geq 2$ we have
\begin{equation}  \label{eq:martest}
	\begin{split}
	\mathbf{E} |X_{k,N_k}|^p &\leq C_p (N_k-N_{k-1})^{p/2} \max_{n\in [N_{k-1}, N_k]} \mathbf{E} \Big|\frac{N_{k}^{2/3}}{n }\big(|\xi_1^{(n )}|^2-1\big)\Big|^{p}\\
&\leq C_pN_k^{-\frac{p\varepsilon}{2}} \max_{n\in [N_{k-1}, N_k]} \mathbf{E} \big| |\xi_1^{(n )}|^2-1\big|^{p}=\mathcal{O}(N_k^{-\frac{p\varepsilon}{2}}). 
	\end{split}
\end{equation}
Since $p$ can be chosen  arbitrarily large, combining \eqref{031120} and \eqref{eq:martest}, we infer
\begin{align}
	\mathbf{P}\big[\mathcal{E}_k(\delta)\big]=\mathcal{O}(N_k^{-D}) \label{031124}
\end{align}
when $k$ is sufficiently large. This immediately implies \eqref{031103}. 
\end{proof}

\subsubsection{Lower bound for $\liminf$}
In this section, we show that, almost surely,
\begin{align}
	\liminf_{N\to \infty} \frac{\lambda_1^{(N)}}{(\log N)^{1/3}}\geq -c_{**}, \qquad c_{**}=\Big(\frac{8}{\beta}\Big)^{1/3}. \label{031127}
\end{align}
That means, we shall show that for any $c>c_{**}$, almost surely, $\lambda_1^{(N)}\leq -c(\log N)^{1/3}$ occurs only finitely many times. The argument is analogous to Section \ref{subsubsec:upplimsup} and hence we will be very brief. 

Again, we first prove that the conclusion is true for the subsequence defined in (\ref{subseq}) with $\alpha<3$ to be chosen. By the one point tail probability estimate \eqref{eq:lefttail}, for any $\epsilon > 0$, we have 
\begin{align*}
	\mathbf{P}\Big[\lambda_1^{(N_k)}\leq -c (\log N_k)^{1/3}\Big]\lesssim N_k^{-\frac{\beta c^3}{24}(1 - \epsilon)}\sim k^{-\alpha\frac{\beta c^3}{24}(1 - \epsilon)}.
\end{align*}
As $c>c_{**}$, we can choose $\alpha$ to be sufficiently close to $3$ so that 
\begin{align*}
	\sum_{k}\mathbf{P}\Big[\lambda_1^{(N_k)}\leq -c(\log N_k)^{1/3}\Big]\lesssim \sum_{k} k^{-\alpha \frac{\beta c^3}{24}(1-\epsilon)}<\infty
\end{align*}
if $\epsilon $ is taken sufficiently small. 
By the Borel-Cantelli lemma, we hence know that almost surely 
\begin{align*}
	\liminf_{k\to \infty} \frac{\lambda_1^{(N_k)}}{(\log N_k)^{1/3}}\geq -c. 
\end{align*}

Next, we extend the conclusion from the subsequence $\{N_k\}$ to the whole $\mathbf{N}$. To this end, for $\delta > 0$ and $k \in \N$, we define
\begin{align*}
	\widehat{\mathcal{E}}_k(\delta):=\Big\{ \exists n\in [N_{k-1}+1, N_k]: \lambda_1^{(n)}\leq -(c+\delta)(\log n)^{1/3}\Big\}\bigcap \Big\{ \lambda_1^{(N_{k-1})}\geq -c(\log N_{k-1})^{1/3}\Big\}
\end{align*}
Notice that, unlike \eqref{031113}, here in the second event on the rhs., we used $\lambda_1^{(N_{k-1})}$ instead of $\lambda_1^{(N_k)}$ to adapt to the direction of an inequality  analogous to (\ref{diff}).  Completely analogously to \eqref{031103}, it suffices to show 
\begin{align}
	\sum_k \mathbf{P}\big[\widehat{\mathcal{E}}_k(\delta)\big]<\infty. \label{031125}
\end{align}
In order to do so, we note that, similarly to \eqref{031120} and using the notation \eqref{eq:martdef}, we now have 
\begin{align*}
	\mathbf{P}\big[\widehat{\mathcal{E}}_k\big] &\leq \mathbf{P}\Big[\min_{n\in [N_{k-1}+1, N_k]} (\lambda_1^{(n)}-\lambda_1^{({N_{k-1}})})\leq -\frac{\delta}{2}(\log N_k)^{\frac13}\Big] \notag\\
	&\leq \mathbf{P}\Big[ \max_{n\in [N_{k-1}+1, N_k]} \Big|N_{k}^{2/3}\sum_{\ell=N_{k-1}+1}^{n} \frac{1}{\ell } \Big(|\xi_1^{(\ell )}|^2-1\Big)\Big|\geq \frac{\delta}{3}(\log N_k)^{\frac13} \Big]+\mathcal{O}(N_k^{-D})\\
	&=\mathbf{P}\Big( \max_{n\in [N_{k-1}+1, N_k]} \big|X_{k,n}\big|\geq \frac{\delta}{3}(\log N_k)^{\frac13} \Big)+\mathcal{O}(N_k^{-D})
\end{align*}
for any $D > 0$. 
Next, analogously to \eqref{031124}, using the maximum inequality and moment inequality for martingales as in \eqref{031120}--\eqref{eq:martest}, we obtain
\begin{align*} 
	\mathbf{P}\big[\widehat{\mathcal{E}}_k(\delta)\big]=\mathcal{O}(N_k^{-D})
\end{align*}
when $k$ is sufficiently large. We thus conclude \eqref{031125} and hence the proof of \eqref{031127}. 

\subsection{Concluding the proof} Given \eqref{031737} and \eqref{031102} for the $\limsup$, and \eqref{eq:uppliminf} and \eqref{031127} for the $\liminf$, the proof of Theorem \ref{thm:main} is immediate. \qed

\section{Decorrelation estimate: Proof of Proposition \ref{prop:decor}} \label{app:decor}

The proof of Proposition \ref{prop:decor} is based on a dynamical argument, similar to our approach in \cite{minor}, but now we need to cover the moderate deviation regime. We compare the array $X$ with a Gaussian divisible array. To do so, we embed $X$ into the Ornstein-Uhlenbeck (OU) flow
\begin{equation}
	\dif X_t = -\frac{1}{2}X_t\dif t + \dif B_t,\quad X_0=X,
	\label{eq:GFT_flow}
\end{equation}
where $B_t$ is a doubly infinite matrix-valued Brownian motion with the same second order correlation structure as $X$, such that the first and second moments of $X_t$ are preserved along the flow \eqref{eq:GFT_flow}. The solution of the OU flow \eqref{eq:GFT_flow} is naturally understood by considering the analogous flow on $X_t^{(N)}$ for every finite $N \in \N$. The OU flow inserts an independent Gaussian component of variance $t$ in each matrix element.

For $t \ge 0$ and $N \in \N$, we naturally associate the time dependent Wigner matrix $H_t^{(N)} := N^{-1/2} X_t^{(N)}$, as in \eqref{eq:Hdef}, and denote the eigenvalues of $H_t^{(N)}$ by $\{\widetilde{\lambda}_i^{(N)}(t)\}_{i=1}^{N}$ in decreasing order. It is well-known that the eigenvalues do not cross along their evolution therefore $ \widetilde{\lambda}_1^{(N)}(t)$ remains
the top eigenvalue for all $t$.
The shifted and rescaled time-dependent top eigenvalue is denoted by 
\begin{equation*}
\lambda_1^{(N)}(t) := N^{2/3} (\widetilde{\lambda}_1^{(N)}(t) - 2),
\end{equation*}
just as in \eqref{eq:TWconv}. Moreover, for $N_1, N_2 \in \N$ and $x_1, x_2 \ge 0$, we introduce the time-dependent analogs of the tail events \eqref{eq:tailevents}:
\begin{equation} \label{eq:taileventstime}
	\mathfrak{F}_l(t, x_l) := \left\{ \lambda_{1}^{(N_l)}(t) \ge x_l (\log N_l)^{2/3}\right\}\,, \quad  \mathfrak{E}_l(t, x_l) := \left\{ \lambda_{1}^{(N_l)}(t) \le  - x_l (\log N_l)^{1/3}\right\}\,, \quad l = 1,2\,. 
\end{equation}

As the first step, we now show that, for sufficiently long times, the tail events in \eqref{eq:taileventstime} 
are decorrelated. 

\begin{proposition}[Step 1: Decorrelation for Gaussian divisible ensemble] \label{prop:DBM}
Fix $\epsilon_0 >0$ and take two large positive integers $N_1,  N_2 \in \N$ such that $N_2^{2/3 + \epsilon_0} \le N_2 -N_1 \le N_2^{1 - \epsilon_0}$. Fix a large constant $K > 0$. Then there exist constants $\delta, \omega >0$ such that 
\begin{subequations} \label{eq:decorDBM}
	\begin{equation} \label{eq:rightdecorDBM}
		\Prob \left[\mathfrak{F}_1(t, x_1) \cap \mathfrak{F}_2(t, x_2)\right] = \Prob\left[\mathfrak{F}_1(t, x_1)\right] \, \Prob\left[\mathfrak{F}_2(t, x_2)\right] \big(1 + \mathcal{O}(N_2^{-\delta})\big)
	\end{equation}
	and 
	\begin{equation} \label{eq:leftdecorDBM}
		\Prob \left[\mathfrak{E}_1(t, x_1) \cap \mathfrak{E}_2(t, x_2)\right] = \Prob\left[\mathfrak{E}_1(t, x_1)\right] \, \Prob\left[\mathfrak{E}_2(t, x_2)\right] \big(1 + \mathcal{O}(N_2^{-\delta})\big) 
	\end{equation}
	uniformly in $t \ge N_2^{-1/3+\omega}$ and $K^{-1} \le x_l \le K $, $l=1,2$. 
\end{subequations}
\end{proposition}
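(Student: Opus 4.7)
The plan is to prove Proposition 3.1 by a short-range Dyson Brownian motion (DBM) coupling argument that upgrades our typical-regime decorrelation from \cite{minor} to the moderate-deviation regime covered here. The key picture is that for $t \ge N_2^{-1/3+\omega}$ the edge DBM has had time to mix each top eigenvalue $\lambda_1^{(N_l)}(t)$ over a spectral window much larger than the natural spacing $N_2^{-2/3}$, and since $N_2-N_1 \gg N_2^{2/3}$, the two edge dynamics barely communicate. Concretely, I would embed $H_t^{(N_1)}$ as a principal submatrix of $H_t^{(N_2)}$ (up to the deterministic rescaling $\sqrt{(N_1-1)/N_1}$ etc.) so that both edge processes are driven by the same OU Brownian motion of \eqref{eq:GFT_flow}, then introduce an independent auxiliary Brownian motion and construct a decoupled pair $(\widehat\mu_1(t), \widehat\mu_2(t))$ whose marginals match those of $\lambda_1^{(N_l)}(t)$ but whose components are independent.

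The heart of the proof is to bound
\begin{equation*}
\Bigl| \Prob\bigl[\mathfrak{F}_1(t,x_1) \cap \mathfrak{F}_2(t,x_2)\bigr] - \Prob\bigl[\widehat{\mathfrak{F}}_1(t,x_1)\bigr]\Prob\bigl[\widehat{\mathfrak{F}}_2(t,x_2)\bigr]\Bigr|,
\end{equation*}
where $\widehat{\mathfrak{F}}_l$ is the analog of $\mathfrak{F}_l$ for $\widehat\mu_l$. Applying Itô's formula to a smoothed indicator of the joint event, this difference reduces to a time integral of cross-quadratic variations whose integrand is controlled by squared overlaps of the top eigenvectors of the relevant minors of $H_s^{(N_1)}$ and $H_s^{(N_2)}$. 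The multi-resolvent local laws from \cite{eigenv_decorr} yield the overlap bound $\mathcal{O}_\prec\bigl((N_2-N_1)^{-1}\bigr) \le \mathcal{O}_\prec(N_2^{-2/3-\epsilon_0})$, and, combined with integration in $s \in [0,t]$ and standard rigidity from local laws, this produces an \emph{absolute} coupling error of size $N_2^{-\delta'}$ for some $\delta' = \delta'(\epsilon_0,\omega) > 0$ that can be made arbitrarily large by enlarging $\omega$.

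To pass from this absolute error to the claimed \emph{multiplicative} error $\bigl(1 + \mathcal{O}(N_2^{-\delta})\bigr)$ in \eqref{eq:rightdecorDBM}, one notes that $H_t^{(N_l)}$ is itself a Wigner matrix of the same symmetry class as $H^{(N_l)}$, so Proposition \ref{prop:tails} applies directly to it and yields the two-sided estimate $\Prob[\mathfrak{F}_l(t,x_l)] \gtrsim N_l^{-c(\beta, K)}$ in the parameter range $K^{-1} \le x_l \le K$. Choosing $\omega$ sufficiently large to guarantee $\delta' > 2 c(\beta,K) + \delta$ then converts the absolute coupling error into the multiplicative one. The same argument applied to the left-tail events $\mathfrak{E}_l$, using Proposition \ref{prop:tails}(b) in place of (a), yields \eqref{eq:leftdecorDBM}.

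The main obstacle will be ensuring that the DBM comparison error decays as a polynomial of arbitrarily large degree in $N_2$. In the typical regime treated in \cite{minor} the tail events had order-one probability, so any single power $N_2^{-\delta'}$ sufficed; here, the joint probability is polynomially small and the degree of smallness depends on $K$ and $\beta$ through the exponents in Proposition \ref{prop:tails}. Consequently one must carefully combine the eigenvector overlap bounds from \cite{eigenv_decorr} with the short-range DBM comparison of \cite{landon2017edge, Bou_extreme} (and their refinements in several follow-ups) to gain a factor that beats \emph{both} marginal tail exponents simultaneously, uniformly in $K^{-1} \le x_l \le K$. Controlling the small exceptional sets where rigidity or the multi-resolvent law fails, via the stochastic domination framework of Proposition \ref{prop:tails}, is a routine but essential technical check.
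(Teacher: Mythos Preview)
Your overall architecture---DBM plus eigenvector overlap bounds from \cite{eigenv_decorr}, followed by absorption of an additive error into a multiplicative one via the tail lower bounds of Proposition~\ref{prop:tails}---is exactly the paper's. The gap is in the mechanism you propose for the coupling error itself.

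You plan to apply It\^o's formula to a smoothed joint indicator, bound the resulting time integral of the cross--quadratic variation by the squared overlaps, and claim this yields an \emph{absolute} error $N_2^{-\delta'}$ with $\delta'$ ``arbitrarily large by enlarging $\omega$''. Two problems: first, the cross--variation integral runs over $s\in[0,t]$ with $t\ge N_2^{-1/3+\omega}$, so enlarging $\omega$ makes this integral \emph{larger}, not smaller---the overlap exponent is pinned at $\epsilon_0/2$ by \cite[Proposition~3.1]{minor} and does not improve with longer times. Second, your decoupled pair $(\widehat\mu_1,\widehat\mu_2)$ with independent driving but the \emph{same} (correlated) initial data $\lambda_1^{(N_l)}(0)$ is not independent at time $t$; the It\^o comparison only removes the driving correlation, not the initial one, and it is precisely the DBM \emph{relaxation} mechanism (forgetting of initial data) that you still need to invoke separately.

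The paper sidesteps both issues by quoting the pathwise coupling of \cite[Proposition~5.1]{minor} directly: under the overlap bound there exist \emph{fully independent} GOE/GUE top eigenvalues $\mu_1^{(1)}(t),\mu_1^{(2)}(t)$ with $|\widetilde\lambda_1^{(N_l)}(t)-\mu_1^{(l)}(t)|\le N_2^{-2/3-\omega}$ \emph{with very high probability}, for a \emph{fixed} $\omega>0$. On this good event one merely shifts the thresholds $\widetilde x_l$ by $\pm N_2^{-2/3-\omega}$ and uses explicit GOE/GUE tail regularity to see the shift is multiplicatively $(1+\mathcal{O}(N_2^{-\delta}))$ with a fixed $\delta$; the complement of the good event contributes the separate additive $\mathcal{O}(N_2^{-D})$ for \emph{any} $D$, and it is this $N_2^{-D}$---coming from the w.v.h.p.\ nature of the coupling, not from enlarging $\omega$---that is absorbed by the polynomial tail lower bound exactly as you outline. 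The multiplicative structure thus appears automatically from the pathwise closeness, and one never needs the absolute probability difference to beat $N_2^{-2c(\beta,K)}$ by tuning DBM parameters.
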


The proof of Proposition \ref{prop:DBM} is given in Section \ref{subsec:DBM} below, and it yields the decorrelation of the tail events for matrices with a Gaussian component of size $N^{-1/3+\epsilon}$. As the second step, we now remove this Gaussian component introduced in Proposition \ref{prop:GFT} via a \emph{Green function comparison} (GFT) argument. The proof of the following Proposition \ref{prop:GFT} is given in Section \ref{subsec:GFT}. 

\begin{proposition}[Step 2: Removing the Gaussian component] \label{prop:GFT}
Fix $\epsilon_0 >0$ and take two large positive integers $N_1,  N_2 \in \N$ such that $N_2^{2/3 + \epsilon_0} \le N_2 -N_1 \le N_2^{1 - \epsilon_0}$. Fix a large constant $K > 0$. Assume that there exist constants $\delta, \omega >0$ such that \eqref{eq:decorDBM} holds uniformly in $t \ge N_2^{-1/3 + \omega}$ and $K^{-1} \le x_l \le K$, for $l=1,2$. Then \eqref{eq:decorDBM} holds at $t=0$ as well, uniformly in $K^{-1} \le x_l \le K$, for $l=1,2$. 
\end{proposition}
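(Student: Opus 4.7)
The plan is to perform a Green function comparison (GFT) along the Ornstein--Uhlenbeck flow \eqref{eq:GFT_flow}, in the spirit of the small-probability GFT developed in \cite{small_dev}. Since the OU flow preserves the first two moments of the entries of $X_t$, the matrices $H^{(N_l)}_t$ and $H^{(N_l)}_0 = H^{(N_l)}$ share the same variance profile, and one compares their spectral statistics via the third-and-higher cumulant expansion. The goal is to show that each of the three probabilities
\begin{equation*}
\Prob\!\left[\mathfrak{F}_1(t,x_1) \cap \mathfrak{F}_2(t,x_2)\right], \quad \Prob\!\left[\mathfrak{F}_1(t,x_1)\right], \quad \Prob\!\left[\mathfrak{F}_2(t,x_2)\right],
\end{equation*}
changes only by a multiplicative factor $1 + \mathcal{O}(N_2^{-\delta'})$ between $t_0 := N_2^{-1/3+\omega}$ and $t=0$, for some $\delta' > \delta$. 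Plugging this into the decorrelation estimate at $t = t_0$ supplied by Proposition~\ref{prop:DBM} then yields \eqref{eq:decorDBM} at $t=0$. An identical argument handles the left-tail events $\mathfrak{E}_l$, so I focus on the right-tail case.

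First I would regularise each indicator $\mathbf{1}(\mathfrak{F}_l(t,x_l))$ into a smooth functional $F_l(H^{(N_l)}_t)$ of the resolvent. Setting $E_l := 2 + x_l(\log N_l)^{2/3} N_l^{-2/3}$, one introduces a smooth cutoff $\chi_{E_l}$ that is $1$ above $E_l + \eta$ and $0$ below $E_l - \eta$, with $\eta = N_l^{-2/3 - \eta_0}$ for small $\eta_0 > 0$, and sets $F_l := q\!\left(\mathrm{Tr}\, \chi_{E_l}(H^{(N_l)}_t)\right)$ for a smooth bump $q$ vanishing at $0$ and equal to $1$ above $1/2$. Edge rigidity (\cite[Theorem~7.6]{loc_sc_gen}) implies that $F_l$ coincides with $\mathbf{1}(\mathfrak{F}_l)$ outside an event of probability $N_l^{-D}$ for any $D$. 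Through the Helffer--Sj\"ostrand representation, $F_l$ is a polynomially-bounded smooth functional of finitely many resolvent entries $G^{(N_l)}(z) = (H^{(N_l)}_t - z)^{-1}$ at spectral parameters with $|\mathrm{Im}\, z| \gtrsim \eta$.

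The core of the argument is to differentiate $\mathbf{E}[F_1(H^{(N_1)}_t) F_2(H^{(N_2)}_t)]$ in $t$. By It\^o's formula for \eqref{eq:GFT_flow} and the standard cumulant expansion, the time derivative is a sum of terms weighted by cumulants $\kappa_k$ of the entries $x_{ij}$; the terms with $k \le 2$ cancel by moment-matching of the flow, and the remaining $k \ge 3$ contributions are estimated via multi-resolvent local laws near the edge. Power-counting at scale $\eta$ yields an absolute change bounded by $t \cdot N_2^{-1/2 + C\xi}$ (with $\xi$ arbitrarily small) over the whole interval $[0, t_0]$. Since we need a \emph{relative}, not absolute, bound, I would then run the comparison on $\log \mathbf{E}[F_1 F_2]$ (and separately on each $\log \mathbf{E}[F_l]$), dividing the derivative by the current value of the expectation; this turns the cubic-cumulant term into a relative error $\mathcal{O}(N_2^{-\delta'})$ provided $\omega$ is small enough that $t_0 \cdot N_2^{-1/2 + C\xi}$ is much smaller than the polynomially small target probability $\Prob[\mathfrak{F}_l] \gtrsim N_l^{-C(K)}$ from Proposition~\ref{prop:tails}.

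The principal obstacle is precisely this passage from absolute to relative error, which is what distinguishes the small-probability GFT from the standard one: in typical applications the probabilities are of order one, whereas here they are polynomially small and even a single absolute GFT error would dominate them. Following the strategy of \cite{small_dev}, this is addressed by conditioning on the rigidity event on which the resolvent entries satisfy deterministic bounds, and by extracting a factor of the current probability from the cumulant-expansion terms (essentially because the smoothed counting functions $F_l$ localise the derivative to entries whose spectral weight sits at the edge, where the event $\mathfrak{F}_l$ lives). A Gr\"onwall-type integration of this relative bound from $t_0$ down to $0$ then gives the required multiplicative invariance for all three probabilities simultaneously, at which point substituting into the factorisation \eqref{eq:rightdecorDBM} at $t_0$ yields \eqref{eq:rightdecorDBM} at $t=0$; the identical argument for $\mathfrak{E}_l$ yields \eqref{eq:leftdecorDBM}, completing the proof.
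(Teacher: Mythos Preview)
Your overall strategy---running a GFT along the OU flow in the style of \cite{small_dev} and showing that all three probabilities change only by a multiplicative $1+\mathcal{O}(N_2^{-\delta'})$ between $t_0$ and $0$---matches the paper's, and the regularisation and cumulant-expansion steps are correct in outline. However, the passage from an absolute to a relative error, which you rightly flag as the principal obstacle, is not handled correctly.

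Your proposed route---bound $|\partial_t \E[F_1 F_2]|$ absolutely by $N_2^{-1/2+C\xi}$ and then divide by $\E[F_1 F_2]$ to work with $\log\E[F_1F_2]$---fails: dividing by $\E[F_1 F_2]$ makes the bound \emph{worse}, and the resulting constraint $t_0 N_2^{-1/2}\ll \Prob[\mathfrak{F}_l]\gtrsim N_l^{-C(K)}$ cannot be met once $K$ is large (since $C(K)\sim K^{3/2}$ grows without bound while $t_0 N_2^{-1/2}\sim N_2^{-5/6+\omega}$ is a fixed power; moreover $\omega$ is handed to you by Proposition~\ref{prop:DBM}, not a parameter you may tune). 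The mechanism actually used, stated in the paper as Proposition~\ref{prop:R_dif}, is structurally different: one shows directly that
\[
|\partial_t \E R_t|\;\lesssim\; N_2^{-1/6+3\varepsilon}\,\E[\widetilde R_t]+\mathcal{O}(N_2^{-D}),
\]
where $\widetilde R_t$ is $R_t$ with the thresholds $E_l$ shifted by $2\ell_l=2N_l^{-2/3-\varepsilon/9}$. This works because every term in the cumulant expansion of $\partial_t R_t$ carries a factor $F^{(p)}(\X_{1,t})F^{(q)}(\X_{2,t})$, and since $F$ and all its derivatives vanish on $[0,1/9]$, this factor is itself supported on the tail-like event $\{\X_{1,t}\ge 1/9\}\cap\{\X_{2,t}\ge 1/9\}$; a short rigidity argument (equation \eqref{eq:F_dif}) then dominates $|F^{(p)}(\X_{1,t})F^{(q)}(\X_{2,t})|$ by the shifted $F$-product. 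Iterating this self-bounding inequality (each iteration shifts $E_l$ by a further $2\ell_l$, but the accumulated shift stays far below the moderate-deviation scale) yields $|\E[R_{t_0}-R_0]|\lesssim N_2^{-1/6+3\varepsilon}|\E R_{t_0}|$ uniformly in $K$ and with no smallness requirement on $t_0$ or $\omega$. Your closing remark about ``extracting a factor of the current probability from the cumulant-expansion terms'' points in exactly this direction, but it \emph{replaces} the log-and-divide step rather than rescuing it; the proof has to be built on the self-bounding estimate from the start.
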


Combining Propositions \ref{prop:DBM}--\ref{prop:GFT}, we immediately conclude the proof of Proposition~\ref{prop:decor}. 
\qed

\subsection{Dyson Brownian motion: Proof of Proposition \ref{prop:DBM}} \label{subsec:DBM}
The proof of Proposition \ref{prop:DBM} is based on the following result, which we prove below. 
\begin{proposition} \label{prop:DBMunderlying}
Fix $\epsilon_0 > 0	$ and take to large positive integers $N_1, N_2 \in \N$ such that $0 < N_2 - N_1 \le N_2^{1 - \epsilon_0}$ and a large constant $K > 0$. For $l =1, 2$, consider the time-dependent Wigner matrices $H_t^{(N_l)}$ and let $\{{\bm w}_i^{(N_l)}(t)\}_{i=1}^{N_l}$ be the $\ell^2$-normalized eigenvectors corresponding to the eigenvalues $\{\widetilde{\lambda}_i^{(N_l)}(t)\}_{i=1}^{N_l}$ in decreasing order. Assume that there exist small constants $\omega_*, \omega_E>0$ such that\footnote{Here, the "shorter" of the two vectors, $\bm w_i^{(N_1)} \in \C^{N_1}$ is understood to be extended to a vector in $\C^{N_2}$ by augmenting it with $(N_2 - N_1)$ many zeros.}
	\begin{equation} \label{eq:overlapbound}
		\big|\langle {\bm w}_i^{(N_1)}(t), {\bm w}_j^{(N_2)}(t) \rangle\big|\le N^{-\omega_E},
	\end{equation}
	simultaneously for all $t \ge 0$ and $1\le i,j\le N^{\omega_*}$. Then, there exist constants $\omega_1, \delta >0$ such that for any $D>0$ we have
	\begin{subequations} 		\label{eq:asympindep}
\begin{equation} 	\label{eq:asympindepa}
			\mathbf{P}\big[\mathfrak{F}_1(t,x_1)\cap \mathfrak{F}_2(t,x_2)\big]=\Prob\big[\mathfrak{F}_1(t,x_1)\big] \, \Prob\big[\mathfrak{F}_2(t,x_2)\big] \, \big(1+\mathcal{O}(N_2^{-\delta})\big),
\end{equation}
and
	\begin{equation} 	\label{eq:asympindepb}
		\mathbf{P}\big[\mathfrak{E}_1(t,y_1)\cap \mathfrak{E}_2(t,y_2)\big]=\Prob\big[ \mathfrak{E}_1(t,y_1)\big] \, \Prob\big[\mathfrak{E}_2(t,y_2)\big] \,  \big(1+\mathcal{O}(N_2^{-\delta})\big)
\end{equation}
	\end{subequations}
	uniformly in $t \ge N_2^{-1/3+\omega_1}$, and $K^{-1} \le x_l \le K$, $l =1,2$.
\end{proposition}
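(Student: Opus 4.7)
My plan is to follow the DBM-based decorrelation strategy used in our companion paper \cite{minor}, now adapted to the moderate-deviation regime, relying on the eigenvector overlap bound \eqref{eq:overlapbound} as the only stochastic input and combining it with multi-resolvent local laws from \cite{eigenv_decorr}, within the framework introduced in \cite{macroCLT_complex}.

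\textbf{Setup.} Since $H_t^{(N_1)}$ is the upper-left corner of $H_t^{(N_2)}$ under the common OU flow \eqref{eq:GFT_flow}, the top eigenvalues $\widetilde\lambda_1^{(N_l)}(t)$ satisfy coupled Dyson-type SDEs
\[
d\widetilde\lambda_1^{(N_l)}(t) = \sqrt{\tfrac{2}{\beta N_l}}\, dM_l(t) + b_l(t)\, dt,
\]
whose driving martingales have instantaneous cross-variation $|\langle \bm{w}_1^{(N_1)}(t), \bm{w}_1^{(N_2)}(t)\rangle|^2 \leq N^{-2\omega_E}$ by assumption, so their diffusive parts are asymptotically independent. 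The drift $b_l(t)$ is the usual Dyson repulsion plus the OU restoring term, which is essentially deterministic on the regime of interest by edge rigidity.

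\textbf{Main argument.} I would approximate the indicators of $\mathfrak{F}_l(t,x_l)$ (resp.~$\mathfrak{E}_l(t,y_l)$) by smooth cutoffs $\chi_l$ at the natural fluctuation scale of the rescaled edge, and compare $\E[\chi_1(\lambda_1^{(N_1)}(t))\chi_2(\lambda_1^{(N_2)}(t))]$ with the analogous expectation for a parallel DBM driven by \emph{independent} Brownians, for which the product structure is automatic. An It\^o-type expansion reduces the difference to a time integral of the cross-variation of $M_1, M_2$, which by \eqref{eq:overlapbound} stays $\ll 1$ along a short-time interval of length $N_2^{-1/3+\omega_1}$ provided $\omega_1 < \omega_E$ is sufficiently small. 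Combining this with the short-time edge universality of \cite{landon2017edge, Bou_extreme} to control the drift contributions yields the required factorisation up to $1 + O(N_2^{-\delta})$, as already implemented in the typical regime in \cite{minor, bourgade2024fluctuations, cipolloni2023quenched, spectral_radius}.

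\textbf{Main obstacle.} The principal new difficulty is to propagate the above comparison into the moderate-deviation regime where $\lambda_1^{(N_l)}(t)$ may be atypically far from its mean by a factor of $(\log N_l)^{2/3}$ (resp.~$(\log N_l)^{1/3}$). Two ingredients must be upgraded: (i) the rigidity bounds used to linearise the drift $b_l(t)$ must remain valid conditionally on such tail events, which follows from the edge local law because the bulk eigenvalues are essentially decoupled from the single outlier $\widetilde\lambda_1^{(N_l)}$; and (ii) the smoothing scale of the cutoffs $\chi_l$ must be chosen much smaller than the target deviation window but still large enough for the It\^o estimate to close. Finally, the lower tail bounds of Proposition \ref{prop:tails} guarantee that $\Prob[\mathfrak{F}_l(t,x_l)]$ and $\Prob[\mathfrak{E}_l(t,y_l)]$ are at most polynomially small in $N_l$, so an additive $O(N_2^{-D})$ error (with $D$ chosen sufficiently large) is negligible compared to the product $\Prob[\mathfrak{F}_1]\Prob[\mathfrak{F}_2]$ (resp.~$\Prob[\mathfrak{E}_1]\Prob[\mathfrak{E}_2]$), producing the clean multiplicative form \eqref{eq:asympindep}.
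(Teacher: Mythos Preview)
Your plan is in the right spirit and cites the correct machinery, but it diverges from the paper's proof in one important structural respect and, as a result, manufactures an obstacle that does not exist.

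The paper does not compare smooth observables of $\lambda_1^{(N_l)}(t)$ via an It\^o expansion at the level of test functions. Instead it invokes directly the \emph{pointwise} coupling from \cite[Proposition~5.1]{minor} (itself built on \cite[Section~7]{macroCLT_complex}): under the overlap hypothesis \eqref{eq:overlapbound} there exist two fully independent DBM trajectories $\mu_1^{(1)}(t),\mu_1^{(2)}(t)$ such that, with very high probability,
\[
\big|\widetilde\lambda_1^{(N_l)}(t)-\mu_1^{(l)}(t)\big|\le N_2^{-2/3-\omega}
\]
for $t\ge N_2^{-1/3+\omega_1}$. Because this coupling error is \emph{below the natural fluctuation scale} $N_2^{-2/3}$, it is automatically negligible compared with the moderate-deviation thresholds $N_l^{-2/3}(\log N_l)^{2/3}$ or $N_l^{-2/3}(\log N_l)^{1/3}$. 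One then simply sandwiches the event $\{\widetilde\lambda_1^{(N_l)}(t)\ge\widetilde x_l\}$ between $\{\mu_1^{(l)}(t)\ge\widetilde x_l\mp N_2^{-2/3-\omega}\}$, factorises by independence of the $\mu$'s, and finally uses smoothness of the Gaussian edge distribution to absorb the $N_2^{-2/3-\omega}$ shift into a multiplicative $(1+\mathcal O(N_2^{-\delta}))$.

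Consequently, your ``main obstacle'' --- upgrading rigidity conditionally on the tail event and tuning the smoothing scale of $\chi_l$ --- is not present in the paper's argument: no conditioning is needed because the coupling is a w.v.h.p.\ statement, and no smoothing is needed because the comparison is at the level of hard thresholds. Your observable-level It\^o route could presumably be made to work, but it is both more laborious and less robust: the derivative of a cutoff at scale $N^{-2/3}$ blows up like $N^{2/3}$, so closing the estimate would require exactly the fine homogenisation input that the pointwise coupling already encapsulates. The cleaner path is to cite the coupling and transfer the tail events directly.
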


\begin{proof}[Proof of Proposition \ref{prop:DBM}]
If $N_2 - N_1 \ge N_2^{2/3 + \epsilon_0}$, we have from \cite[Proposition~3.1]{minor}, that \eqref{eq:overlapbound} is satisfied with $\omega_E = \epsilon_0/2$. Hence, by Proposition \ref{prop:DBMunderlying}, the decorrelation statements in \eqref{eq:asympindep} hold for $t \ge N_2^{-1/3 + \omega_1}$, which concludes the proof of Proposition \ref{prop:DBM}.
\end{proof} 

We are hence left with proving Proposition \ref{prop:DBMunderlying}.

\begin{proof}[Proof of Proposition \ref{prop:DBMunderlying}]
	We present only the proof of \eqref{eq:asympindepa}, since the proof of \eqref{eq:asympindepb} is completely analogous. It is well known (see, e.g., \cite{erdHos2017dynamical, anderson2010introduction}) that for any $N\in \N$ the eigenvalues of $H_t^{(N)}$ are the unique strong solution of the Dyson Brownian Motion (DBM):
		\begin{equation}
	\label{eq:DBM}
	\dif \widetilde{\lambda}_i^{(N)}(t)=\sqrt{\frac{\beta}{2N}}\dif b_i^{(N)}(t)+\frac{1}{N}\sum_{j\ne i}\frac{1}{\widetilde{\lambda}_i^{(N)}(t)-\widetilde{\lambda}_j^{(N)}(t)}\dif t -\frac{1}{2} \widetilde{\lambda}_i^{(N)}(t)\dif t
	\end{equation}
	with initial data given by the eigenvalues of $H_0^{(N)}=H^{(N)}$. Here, for fixed $N\in \N$ the $b_i^{(N)}(t)$ are standard real i.i.d. Brownian motions. However, for different $N_1,N_2$ they are correlated, and their covariation process is given by $|\langle {\bm w}_i^{(N_1)}(t), {\bm w}_j^{(N_2)}(t) \rangle|^2 \dif t$. Building on the analysis of weakly correlated DBMs from \cite[Section 7]{macroCLT_complex} (see also \cite[Section 4.5]{bourgade2024fluctuations}), in \cite[Proposition~5.1]{minor} it has been shown\footnote{We point out that in \cite{minor} we used a slightly different version of the DBM compared to \eqref{eq:DBM}. In fact, in \cite[Eq.~(3.4)]{minor}, we consider a DBM without the term $\widetilde{\lambda}_i^{(N)}(t)/2$. However, it can easily be seen that this term does not influence the analysis in \cite{macroCLT_complex}.} that under the assumption \eqref{eq:overlapbound} there exist two fully independent processes\footnote{It is easy to see that those two processes correspond to the top eigenvalues of two independent GUE/GOE matrices.} $\mu_i^{(1)}(t), \mu_i^{(2)}(t)$ satisfying \eqref{eq:DBM}, this time with independent (also among each other) driving Brownian motions $\{\beta_i^{(1)}(t)\}_{i\in [N_1]}$, $\{\beta_i^{(2)}(t)\}_{i\in [N_2]}$, \nc and a small $\omega>0$, such that
	\begin{equation}
		\label{eq:DBMminor}
		\big|\widetilde{\lambda}_1^{(N_l)}(t)-\mu_1^{(l)}(t)\big|\le N_2^{-2/3-\omega},
	\end{equation}
	with very high probability for $l=1,2$ and $t\ge N_2^{-1/3+\omega_1}$.
	
	To keep the notation simple, in the following we always work on the very high probability event on which \eqref{eq:DBMminor} holds, even if not explicitly written. This will lead to the additional $\mathcal{O}(N_2^{-D})$ errors in the formulas below. To further simplify notation, we now set 
	\begin{equation} \label{eq:tildexdef}
		\widetilde{x}_l := 2 + x_l N_l^{-2/3} (\log N_l)^{2/3} \quad \text{for} \quad l = 1,2\,. 
	\end{equation}
	Then, for $t\ge N_2^{-1/3+\omega_1}$ and for any $D>0$, we have
	\begin{equation}
	\label{eq:newh1}
		\begin{split}
			\mathbf{P}\big[ \mathfrak{F}_1(t,x_1)\cap \mathfrak{F}_2(t,x_2)\big] &=\mathbf{P}\left[\widetilde{\lambda}_1^{(N_1)}(t)\ge \widetilde{x}_1, \widetilde{\lambda}_1^{(N_2)}(t)\ge \widetilde{x}_2\right] \\
			&\le \, \mathbf{P}\left[\mu_1^{(1)}(t)\ge \widetilde{x}_1-\big|\widetilde{\lambda}_1^{(N_1)}(t)-\mu_1^{(1)}(t)\big|, \mu_1^{(2)}(t) \ge \widetilde{x}_2-\big|\widetilde{\lambda}_1^{(N_2)}(t)-\mu_1^{(2)}(t)\big|\right] \\
			&\le \, \mathbf{P}\left[\mu_1^{(1)}(t)\ge \widetilde{x}_1-N_2^{-2/3-\omega}, \mu_1^{(2)}(t)\ge \widetilde{x}_2-N_2^{-2/3-\omega}\right]+\mathcal{O}(N_2^{-D}) \\
			&= \, \mathbf{P}\left[\mu_1^{(1)}(t)\ge \widetilde{x}_1-N_2^{-2/3-\omega}\right] \, \mathbf{P}\left[\mu_1^{(2)}(t)\ge \widetilde{x}_2-N_2^{-2/3-\omega}\right]+\mathcal{O}(N_2^{-D}),
		\end{split}
	\end{equation}
	where in the last equality we used that $\mu_1^{(1)}(t)$ and $\mu_1^{(2)}(t)$ are independent. 
	Proceeding analogously, we also obtain a bound in the opposite direction
	\begin{equation}\label{eq:newh15}
	\mathbf{P}\big[ \mathfrak{F}_1(t,x_1)\cap \mathfrak{F}_2(t,x_2)\big]
	\ge \, \mathbf{P}\left[\mu_1^{(1)}(t)\ge \widetilde{x}_1+N_2^{-2/3-\omega}\right] \, \mathbf{P}\left[\mu_1^{(2)}(t)\ge \widetilde{x}_2+N_2^{-2/3-\omega}\right]+\mathcal{O}(N_2^{-D}).
	\end{equation}
	Similar bounds on the probability of a single event $\mathfrak{F}_l(t,x_l)$, $l=1,2$, 
	directly follow from \eqref{eq:DBMminor}:
		\begin{equation}
		\label{eq:newh2}
		\begin{split}
			\mathbf{P}\big[ \mathfrak{F}_l(t,x_l)\big] \le\mathbf{P}\left[\mu_1^{(l)}(t)\ge \widetilde{x}_l-N_2^{-2/3-\omega}\right]+\mathcal{O}(N_2^{-D}), \\
			\mathbf{P}\big[ \mathfrak{F}_l(t,x_l)\big] \ge\mathbf{P}\left[\mu_1^{(l)}(t)\ge \widetilde{x}_l+N_2^{-2/3-\omega}\right]+\mathcal{O}(N_2^{-D}).
			\end{split}
	\end{equation}
	Next, we show that the small shifts $\pm N_2^{-2/3-\omega}$ introduced by \eqref{eq:DBMminor} 
	passing from $\widetilde{\lambda}_1(t)$ to $\mu_1(t)$ can be removed at the cost of a multiplicative error $(1+o(1))$.
	This will follow from an analogous result for GOE/GUE matrices formulated in the next  lemma,
	whose proof is presented at the end of this section.
	\begin{lemma}
	\label{lem:logconcave}
	Fix any small $\omega>0$ and large $K>0$. Let $\mu_1$ be the largest eigenvalue of an $N\times N$ GOE/GUE matrix. Then, there exists a $\delta>0$ such that
	\begin{equation}
	\label{eq:smooth}
		\mathbf{P}\left[\mu_1\ge 2+\frac{x(\log N)^{2/3}}{N^{2/3}} + \mathcal{O}( N^{-2/3-\omega})\right]=\mathbf{P}\left[\mu_1\ge 2+\frac{x(\log N)^{2/3}}{N^{2/3}}\right]\big(1+\mathcal{O}(N^{-\delta})\big),
	\end{equation}
	uniformly in $x\in [K^{-1},K]$. A similar relation holds for the  left tail 
	event $\mathbf{P}\left[\mu_1\le 2- \ldots \right]$ as well.
	\end{lemma}
	Notice that by  \eqref{eq:newh2} it follows that \eqref{eq:smooth} holds for $\mu_1$ replaced by $\mu_1(t)$ or by 
	$\widetilde{\lambda}_1^{(N)}(t)$ as well, if $t\ge N^{-1/3+\omega}$ since $\mu_1=\mu_1(t=\infty)$. The additive $\mathcal{O}(N^{-D})$ errors can be
	incorporated into the leading term using that all probabilities are of polynomial order in $N$ thanks to 
	the bounds in Proposition~\ref{prop:tails}. 
	 Using \eqref{eq:smooth} for $\mu^{(l)}(t)$ in \eqref{eq:newh1} and \eqref{eq:newh2}, \nc we thus conclude 
	\begin{equation*}
		\mathbf{P}\big[\mathfrak{F}_1(t,x_1)\cap \mathfrak{F}_2(t,x_2)\big]\le\mathbf{P}\big[ \mathfrak{F}_1(t,x_1)\big] \, \mathbf{P}\big[ \mathfrak{F}_2(t,x_2)\big] \, \big(1+\mathcal{O}(N_2^{-\delta})\big).
	\end{equation*}
	Proceeding analogously, but using \eqref{eq:newh15} instead of \eqref{eq:newh1}, \nc we also obtain  a matching lower bound
	\begin{equation*}
	\mathbf{P}\big[\mathfrak{F}_1(t,x_1)\cap \mathfrak{F}_2(t,x_2)\big]\ge\mathbf{P}\big[ \mathfrak{F}_1(t,x_1)\big] \, \mathbf{P}\big[ \mathfrak{F}_2(t,x_2)\big] \, \big(1+\mathcal{O}(N_2^{-\delta})\big),
\end{equation*}
	which concludes the proof of Proposition \ref{prop:DBMunderlying}.
\end{proof}

\begin{proof}[Proof of Lemma~\ref{lem:logconcave}]

For some  $\tau_N>0$  of order $N^{-2/3-\omega}$, we define the  ($N$-dependent) intervals
\begin{align}
\mathcal{A}:=\Big[2+\frac{x(\log N)^{2/3}}{N^{2/3}}-\tau_N,\infty\Big), \qquad \mathcal{B}:=\Big[2+\frac{x(\log N)^{2/3}}{N^{2/3}}+N^{-2/3}, \infty\Big).\label{093002}
\end{align}
 Let $c_N$ be the solution of the equation
\begin{align}
-(1-c_N\tau_N)+N^{-2/3}c_N=0, \label{093001}
\end{align} 
then clearly  $c_N=N^{2/3}(1+o(1))$.
Let $s_N:=(1-c_N\tau_N)\in (0,1)$. By (\ref{093001}) and the definition in (\ref{093002}), we have 
\begin{align*}
s_N\mathcal{A}+(1-s_N) \mathcal{B}=\mathcal{C}:=\Big[2+\frac{x(\log N)^{2/3}}{N^{2/3}},\infty\Big).
\end{align*}
We then use the fact that for GOE/GUE, the distribution of $\mu_1$ is log-concave, which is a consequence of the Pr\'{e}kopa-Leindler inequality and the fact that the joint density of the eigenvalues of GOE/GUE is log-concave; see, e.g. Theorem 5 of \cite{log_concavity24}.
Particularly, we have 
\begin{align*}
\mathbf{P}(\mu_1\in \mathcal{C})\geq \big(\mathbf{P}(\mu_1\in \mathcal{A})\big)^{s_N}\big(\mathbf{P}(\mu_1\in \mathcal{B})\big)^{1-s_N}=\mathbf{P}(\mu_1\in \mathcal{A}) \big(\mathbf{P}(\mu_1\in \mathcal{A})\big)^{-c_N\tau_N} \big(\mathbf{P}(\mu_1\in \mathcal{B})\big)^{c_N\tau_N}.
\end{align*}
From the tail probability bounds  for GOE/GUE from
Proposition~\ref{prop:tails} we have that both $\mathbf{P}(\mu_1\in \mathcal{A})$ and $\mathbf{P}(\mu_1\in \mathcal{B})$
are of order $N^{-c(x)}$ with a constant $c(x)\sim 1$ depending on $x$. Since
$c_N\tau_N\sim N^{-\omega}$, the $c_N\tau_N$ powers of these probabilities are very close to 1, thus we have
\begin{align*}
\mathbf{P}(\mu_1\in \mathcal{C})\geq \mathbf{P}(\mu_1\in \mathcal{A})(1+\mathcal{O}(N^{-\delta}))
\end{align*}
for some $\delta>0$. By monotonicity, the opposite inequality $\mathbf{P}(\mu_1\in \mathcal{C})\leq \mathbf{P}(\mu_1\in \mathcal{A})$
is straightforward. Thus we conclude the proof of (\ref{eq:smooth}) in case the shift
$\mathcal{O}(N^{-2/3-\omega})$ is negative. In case the shift is positive, we only need to switch the role of the LHS and the RHS of (\ref{eq:smooth}). Hence, we completed the proof. 
\end{proof}

\nc

\subsection{Green function comparison: Proof of Proposition \ref{prop:GFT}} \label{subsec:GFT}
We focus on proving that \eqref{eq:rightdecorDBM} holds for $t = 0$; the arguments for \eqref{eq:leftdecorDBM} are analogous and thus omitted. Our approach follows the comparison strategy introduced in \cite{small_dev} for the moderate deviations of the largest eigenvalue of a Wigner matrix, later adapted to the case of multiple matrices in \cite{univ_extr}. As in  \cite[Eq. (2.11)]{small_dev}, we fix a small $\varepsilon>0$ and rewrite the lhs. of \eqref{eq:rightdecorDBM} as
\begin{equation} \label{eq:expresp}
	\mathbf{P}\left[ \mathfrak{F}_1(t, x_1)\cap\mathfrak{F}_2(t, x_2)\right] = \E \left[ F\big(\chi_{E_1}\big(H_t^{(N_1)}\big)\big) F\big(\chi_{E_2}\big(H_t^{(N_2)}\big)\big)\right] + \mathcal{O}(N_2^{-D}),
\end{equation}
where we used the notation (recall \eqref{eq:tildexdef}) 
\begin{equation*}
	\chi_{E_l}(\cdot) := \mathrm{Tr}\big( \bm{1}_{[E_l,E_L]} (\cdot)\big),\quad \text{with} \quad E_l:=\widetilde{x}_l,\quad E_L:=2+N^{-2/3+\varepsilon},\quad l=1,2,
\end{equation*}
where $F:\R_+\to \R_+$ is a smooth non-decreasing cut-off function such that $F(x)=0$ for $x\in [0, 1/9]$ and 
$F(x)=1$ for $x\ge 2/9$, as in \cite[Eq.~(2.12)]{small_dev}.

Next, we wish to approximate $\chi_{E_l}\big(H_t^{(N_l)}\big)$ by a function, that can be expressed in terms of the resolvent 
\begin{equation*}
	G_{l,t}(z):=\big(H^{(N_l)}_t-z\big)^{-1}\quad \text{for} \quad z\in\C\setminus\R \,, 
\end{equation*}
which will allow us to prove Proposition \ref{prop:GFT} via a \emph{Green function comparison} (GFT) argument. 
In fact, applying \cite[Eq.~(2.16)]{small_dev} and using that $\chi_{E_l}\big(H_t^{(N_l)}\big)$ is integer-valued, we obtain 
\begin{equation} \label{eq:exptoresolvent}
	\begin{split}
	\E \left[ F\big(\chi_{E_1+\ell_1}*\theta_{\eta_1}\big(H_t^{(N_1)}\big)\big) \right. &\left.  F\big(\chi_{E_2+\ell_2}*\theta_{\eta_2}\big(H_t^{(N_2)}\big)\big)\right] - \mathcal{O}(N_2^{-D})\\
& \hspace{-1cm}\le \E \left[ F\big(\chi_{E_1}\big(H_t^{(N_1)}\big)\big) \, F\big(\chi_{E_2}\big(H_t^{(N_2)}\big)\big) \right]\\
& \hspace{-1cm}\le \E \left[ F\big(\chi_{E_1-\ell_1}*\theta_{\eta_1}\big(H_t^{(N_1)}\big)\big) \,  F\big(\chi_{E_2-\ell_2}*\theta_{\eta_2}\big(H_t^{(N_2)}\big)\big)\right] + \mathcal{O}(N_2^{-D}),
	\end{split}
\end{equation}
where $\ell_l:=N_l^{-2/3-\varepsilon/9}$, $\eta_l:=N_l^{-2/3-\varepsilon}$
 and $\theta_\eta(x): =\eta/[\pi(x^2+\eta^2)]$ is a Cauchy mollifier. \nc   Following \cite{small_dev}, we can then write
    \begin{equation*}
	\mathcal{X}_{E,l,t}:= \chi_{E}*\theta_{\eta_l}\big(H^{(N_l)}_t\big)=\frac{N}{\pi}\int_{E}^{E_L}\Im \langle G_{l,t}(y+\ii\eta_l)\rangle \dif y,\quad l=1,2.
\end{equation*}
Now we are ready to state our main technical result,  to be proven at the end
 of this section,  which is \nc needed for the proof of Proposition \ref{prop:GFT}. 
\begin{proposition}\label{prop:R_dif} \label{prop:GFTunderlying} Using the above notations, we define
	\begin{equation}
		R_t=R_t(E_1,E_2):=F(\X_{ E_1, \nc 1,t})F(\X_{ E_2, \nc 2,t}).
		\label{eq:def_R}
	\end{equation}
	Then for any $t\ge 0$ it holds that
	\begin{equation}
		\vert\partial_t \E R_t(E_1,E_2)\vert\lesssim N_2^{-1/6+3\varepsilon} \E R_t(E_1-2\ell_1,E_2-2\ell_2)+ \mathcal{O}(N_2^{-D}).
		\label{eq:dif_R_aim}
	\end{equation}
\end{proposition}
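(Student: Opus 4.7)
The plan is to follow the Green function comparison (GFT) strategy of \cite{small_dev}, adapted to the two-matrix product structure as in \cite{univ_extr}. Along the OU flow \eqref{eq:GFT_flow}, the first two moments of each entry $x_{ab}$ are preserved, so Itô's formula combined with the cumulant (Stein) expansion of $\E[x_{ab}\,\partial_{x_{ab}} R_t]$ up to a finite order $K$ gives
\[
\partial_t \E R_t = - \frac{1}{2}\sum_{k \ge 2}^{K} \sum_{a, b} \frac{c_{k+1}(x_{ab})}{k!}\, \E\bigl[\partial^{k+1}_{x_{ab}} R_t\bigr] + \mathcal{O}(N_2^{-D}),
\]
where the $k = 0, 1$ contributions cancel with the drift and diffusion terms of the OU flow thanks to the matching of the first two moments. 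The task is thus to bound each cumulant term above, with the leading third-order ($k = 2$) one being dominant.

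To estimate $\partial^{k+1}_{x_{ab}}R_t$, I would use the chain rule on $R_t = F(\X_{1,t})F(\X_{2,t})$ together with the standard resolvent identity expressing $\partial_{H^{(N_l)}_{cd}} G_{l,t}$ as bilinear combinations of entries of $G_{l,t}$. Each derivative produces integrated products of resolvent entries $G_{l,t}(y+\ii\eta_l)_{ef}$ for $y\in [E_l,E_L]$, multiplied by derivatives of $F$ evaluated at $\X_{l,t}$. The edge local law from \cite{loc_sc_gen, erdHos2017dynamical} provides $|G_{l,t}(y+\ii\eta_l)_{ab}|\prec 1$ and $\Im\langle G_{l,t}(y+\ii\eta_l)\rangle \prec N_l^{-1/3}$ uniformly in $y\in [E_l, E_L]$, so each integrated trace-type factor contributes a gain of roughly $N_l^{-1/3+\varepsilon}$ (from the interval length $N_l^{-2/3+\varepsilon}$ combined with the edge smallness of the imaginary part), while each additional derivative beyond the first adds a further $N_2^{-1/2}$. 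A direct power count, exactly as in \cite[Section~3]{small_dev}, then gives that the third-order cumulant term is bounded by $N_2^{-1/6 + 3\varepsilon}\, \E\bigl[|F'(\X_{1,t})|+|F'(\X_{2,t})|\bigr]$, and the higher-order cumulant terms are strictly smaller. Finally, to convert $|F'(\X_{l,t})|$ into the shifted cut-off appearing on the right-hand side of \eqref{eq:dif_R_aim}, I would invoke the same sandwiching argument that yields \eqref{eq:exptoresolvent}: on the support of $F'(\X_{l,t})$ one has $\X_{l,t}\ge 1/9$, which together with the monotonicity of $E \mapsto \chi_E*\theta_\eta$ forces $\chi_{E_l-2\ell_l}*\theta_{\eta_l}(H^{(N_l)}_t)\ge 2/9$ with very high probability, hence $F\bigl(\chi_{E_l-2\ell_l}*\theta_{\eta_l}(H^{(N_l)}_t)\bigr)=1$ and the target product cut-off dominates.

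The main obstacle compared with the single-matrix case of \cite{small_dev} is that derivatives $\partial_{x_{ab}}$ with $a,b\le N_1$ hit both $H^{(N_1)}_t$ and $H^{(N_2)}_t$ simultaneously, generating cross terms that couple entries of $G_{1,t}$ with those of $G_{2,t}$. These cross terms do not decouple directly, but they are controlled by Cauchy--Schwarz together with the separate edge local laws for the two resolvents, and then absorbed into the product cut-off on the right-hand side of \eqref{eq:dif_R_aim}. This multi-matrix bookkeeping is precisely the extension of the single-matrix GFT already carried out in \cite{univ_extr}, and it transfers to the present setting with only cosmetic changes.
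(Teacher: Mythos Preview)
Your overall strategy is correct and matches the paper's: cumulant expansion along the OU flow, chain rule on the product $R_t$, edge local law for smallness, and the sandwiching argument to convert derivatives of $F$ into the shifted cut-offs. However, two related points in your treatment of the third-order term are off.

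First, your ``direct power count'' does not yield $N_2^{-1/6}$ for the most critical third-order contribution. When $k=2$ and only one of the two $F$'s is differentiated (i.e.\ $p+q=1$, say $p=1$, $q=0$), the term is $F'(\X_{1,t})F(\X_{2,t})\,\partial_{ab}^3\X_{1,t}$. Three derivatives give $N_1^{-3/2}$, the single $\Im$ gives one factor of $\Psi=N_1^{-1/3+\varepsilon}$, and the $(a,b)$-summation costs $N_2^{2}$; the net is $N_2^{+1/6+\varepsilon}$, not $N_2^{-1/6}$. The paper closes this gap by invoking the \emph{isotropic resummation} of \cite[Eq.~(2.50)]{small_dev}: one sums over $a$ (or $b$) first to reconstitute an isotropic resolvent quantity, which then enjoys an additional $\Psi$-type smallness from the isotropic local law. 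This is the genuine technical input here, and your sketch omits it.

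Second, and relatedly, you have the difficulty backwards. The cross terms where derivatives hit both $\X_{1,t}$ and $\X_{2,t}$ (i.e.\ $p\ge1$ and $q\ge1$) are the \emph{easy} case: each differentiated factor contributes its own $\Psi$, so at third order one gets $N_2^{-3/2}\cdot N_2^{2}\cdot\Psi^2\lesssim N_2^{-1/6+2\varepsilon}$ directly, with no need for Cauchy--Schwarz or anything beyond the entrywise local law. The hard case is precisely the non-cross term $p+q=1$ discussed above. Also note that your intermediate bound $\E\bigl[|F'(\X_{1,t})|+|F'(\X_{2,t})|\bigr]$ has lost the product structure; since every term in the chain-rule expansion carries both $F^{(p)}(\X_{1,t})$ and $F^{(q)}(\X_{2,t})$, you should keep both factors throughout so that the final sandwiching step delivers the product $F(\chi_{E_1-2\ell_1}\!*\theta_{\eta_1})F(\chi_{E_2-2\ell_2}\!*\theta_{\eta_2})$ on the right-hand side.
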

We remark that the \nc estimate \eqref{eq:dif_R_aim} is the analogue of \cite[Eq. (2.57)]{small_dev}. Fix a time $t$ such that $N_2^{-1/3+\omega} \le t\lesssim 1$ and integrate \eqref{eq:dif_R_aim} from $s$ to $t$ for any $s\in [0,t]$:
\begin{equation}
\left\vert\E R_t(E_1,E_2) - \E R_s(E_1,E_2)\right\vert \lesssim N_2^{-1/6+3\varepsilon}\sup_{r\in [s,t]}\left\vert \E R_r(E_1-2\ell_1,E_2-2\ell_2)\right\vert + N_2^{-D}. 
\label{eq:for_R_iteration}
\end{equation} 
Using \eqref{eq:for_R_iteration} we prove that for any $k\in\N$, independent of $N_1,N_2$, it holds for any $r\in[0,t]$ that
\begin{equation}
\left\vert\E R_t(E_1,E_2) - \E R_r(E_1,E_2)\right\vert \lesssim \sum_{m=1}^{k-1} N_2^{(-1/6+3\varepsilon)m}\left\vert \E R_t(E_1-m\cdot 2\ell_1, E_2-m\cdot 2\ell_2)\right\vert + N_2^{(-1/6+3\varepsilon)k}.
\label{eq:R_iteration}
\end{equation} 
To obtain \eqref{eq:R_iteration} for $k=1$ we combine \eqref{eq:for_R_iteration} with the trivial bound $|R_r(E_1-2\ell_1,E_2-2\ell_2)|\le 1$ coming from $|F|\le 1$. We now use induction on $k$: from \eqref{eq:R_iteration} applied to $k=n$ and $E_1-2\ell_1, E_2-2\ell_2$ instead of $E_1,E_2$, we get
$$
  |\E R_r(E_1-2\ell_1,E_2-2\ell_2)| \le  |\E R_t(E_1-2\ell_1,E_2-2\ell_2)| + \sum_{m=1}^{n-1} (\ldots)
  +N_2^{(-1/6+3\varepsilon)n},
 $$
 which is then used to upper bound the rhs. of \eqref{eq:for_R_iteration} to arrive at  \eqref{eq:R_iteration} for $k=n+1$.
 
Next we show that for any fixed $m\in\N$ we have
\begin{equation}
\E R_t(E_1-m\cdot 2\ell_1,E_2-m\cdot 2\ell_2) =\E R_t(E_1,E_2)(1+\mathcal{O}(N^{-\delta_1}_2))
\label{eq:R_shift}
\end{equation}
for some $\delta_1>0$ which is independent of $N_1, N_2$. 
 In order to prove~\eqref{eq:R_shift} we go back to $\mathfrak{F}_1\cap \mathfrak{F}_2$ since
the small shift in the argument can be controlled on the $\mathfrak{F}$ level thanks to Lemma~\ref{lem:logconcave}. Denote for short $\Delta x_l:=  m\cdot 2 \ell_l N_l^{2/3}(\log N_l)^{-2/3}$, $l=1,2$. We have from \eqref{eq:exptoresolvent} that 
\begin{equation}
\begin{split}
&\mathbf{P}\left[\mathfrak{F}_1(t, x_1+\Delta x_1)\cap \mathfrak{F}_{2}(t, x_2+\Delta x_2)\right]+\mathcal{O}(N_2^{-D})\\
&\quad\le \E R_t(E_1,E_2) \le \mathbf{P}\left[\mathfrak{F}_1(t, x_1-\Delta x_1)\cap \mathfrak{F}_{ 2}(t, x_2-\Delta x_2)\right]+\mathcal{O}(N_2^{-D}).
\end{split}
\label{eq:FF_to_PP} 
\end{equation}
Combining \eqref{eq:FF_to_PP} with the assumption that \eqref{eq:rightdecorDBM} holds at time $t$,  and replacing $x_l$
with $x_l\pm \Delta x_l$, we obtain
\begin{equation}
\begin{split}
&\mathbf{P}\left[\mathfrak{F}_1(t, x_1+\Delta x_1)\right]\mathbf{P}\left[\mathfrak{F}_{2}(t, x_2+\Delta x_2)\right]\left(1+\mathcal{O}(N^{-\delta}_2)\right)\\
&\quad\le \E R_t(E_1,E_2)\le \mathbf{P}\left[\mathfrak{F}_1(t, x_1-\Delta x_1)\right]\mathbf{P}\left[\mathfrak{F}_{ 2}(t, x_2-\Delta x_2)\right]\left(1+\mathcal{O}(N^{-\delta}_2)\right).
\end{split}
\label{eq:FF_mult_err}
\end{equation}
Here we absorbed the additive error terms in \eqref{eq:FF_to_PP} into the multiplicative error terms in the same way as it was discussed below the statement of Lemma \ref{lem:logconcave}. Recall from the same discussion that  \eqref{eq:smooth} is also valid for $\widetilde{\lambda}_1^{(N_l)}(t)$, $l=1,2$,  i.e.   $\mathbf{P}\left[\mathfrak{F}_l(t, x_l \pm \Delta x_l)\right]$
can be compared with $\mathbf{P}\left[\mathfrak{F}_l(t, x_l)\right]$ since  $\Delta x_l = \mathcal{O}(N^{-\omega})$. 
 Thus, applying Lemma \ref{lem:logconcave}  for $\widetilde{\lambda}_l^{(N_l)}(t)$ to the lower and upper bounds on $\E R_t(E_1,E_2)$ from \eqref{eq:FF_mult_err}, we get
\begin{equation}
\E R_t(E_1,E_2) = \mathbf{P}\left[\mathfrak{F}_1(t, x_1)\right]\mathbf{P}\left[\mathfrak{F}_{2}(t, x_2)\right]\left(1+\mathcal{O}(N^{-\delta_1}_2)\right)
\label{eq:R_factor}
\end{equation}
for some $\delta_1>0$. Using~\eqref{eq:R_factor} once for the arguments $(E_1, E_2)$ and once for $(E_1-m\cdot 2\ell_1, 
E_2-m\cdot 2\ell_2)$,
and controlling the shifts in the rhs. of~\eqref{eq:R_factor} via Lemma~\ref{lem:logconcave}, we immediately obtain
\eqref{eq:R_shift}.

Choosing sufficiently large $k$ in \eqref{eq:R_iteration} with $r=0$, and applying \eqref{eq:R_shift} to each of the terms in the rhs. of \eqref{eq:R_iteration}, we conclude that
\begin{equation}
\E R_0(E_1,E_2) = \E R_t(E_1,E_2)\left(1+\mathcal{O}(N^{-\delta_1}_2)\right).
\label{eq:R_zero_to_t}
\end{equation}
Next, from \eqref{eq:exptoresolvent} we have that
\begin{equation}
\E R_0(E_1-\ell_1,E_2-\ell_2) -\mathcal{O}(N_2^{-D})\le \mathbf{P}\left[\mathfrak{F}_1(0, x_1)\cap \mathfrak{F}_{2}(0, x_2)\right]\le \E R_0(E_1+\ell_1,E_2+\ell_2) +\mathcal{O}(N_2^{-D}).
\end{equation}
Combining   \eqref{eq:R_zero_to_t} for the shifted arguments,  \eqref{eq:R_shift} to remove the shifts
and \eqref{eq:R_factor} we arrive at
\begin{equation}
\mathbf{P}\left[\mathfrak{F}_1(0, x_1)\cap \mathfrak{F}_1(0, x_2)\right] = \mathbf{P}\left[\mathfrak{F}_1(t, x_1)\right]\mathbf{P}\left[\mathfrak{F}_{2}(t, x_2)\right]\left(1+\mathcal{O}(N^{-\delta_1}_2)\right).
\label{eq:P0_factor}
\end{equation}
\nc
To conclude the argument, we now need that 
\begin{equation} \label{eq:plugin}
\Prob \left[\mathfrak{F}_l(t, x_l)\right] = \Prob \left[\mathfrak{F}_l(0, x_l)\right] \, \big( 1 + \mathcal{O}(N_2^{-\delta_1})\big) \qquad l =1,2 \,. 
\end{equation}
This follows similarly to the proof of \eqref{eq:R_zero_to_t} with \eqref{eq:dif_R_aim} replaced by its one-point version,
see  \cite[Eq. (2.57)]{small_dev} for an analogous argument. \nc
By plugging \eqref{eq:plugin} into \eqref{eq:P0_factor}, we complete the proof of Proposition~\ref{prop:GFT}. \qed 
\vspace{2mm}

We are hence left with giving the proof of Proposition \ref{prop:GFTunderlying}. 

\begin{proof}[Proof of Proposition \ref{prop:R_dif}] The argument is similar to \cite[Proof of Theorem~2.4]{small_dev} and we will hence be very brief.  For simplicity, we focus on the real symmetric case, the computations in the complex Hermitian setting are analogous. Let $\partial_{ab}$ denote the derivative with respect to $x_{ab}$ for $a\le b$. Then, for any $D > 0$, by choosing $K = K(D)$ sufficiently large, we have
	\begin{equation}
		\partial_t \E R_t = \frac{1}{2}\E \sum_{a\le b} \left(-x_{ab}\partial_{ab} R_t + (1+\delta_{ab})\partial_{ab}^2 R_t\right) = -\frac{1}{2}\sum_{k=2}^K\sum_{a\le b} \frac{\kappa_{ab}^{(k+1)}(t)}{k!}\E\left[\partial_{ab}^{k+1} R_t\right] + \mathcal{O}(N_2^{-D}),
		\label{eq:R_dif}
	\end{equation}
	where the summation runs over $1\le a\le b\le N_2$. The normalization is chosen so that $\kappa_{ab}^{(k+1)}(t)$, which is the cumulant of $x_{ab}(t)$ of order $k+1$, is of order 1, and we gain smallness through differentiation with respect to $x_{ab}$. Recalling the definition of $R_t$ from \eqref{eq:def_R}, 
	and dropping the index $E_l$  from $\X_{E_l,l,t}$, \nc we compute $\partial_{ab}^{k+1} R_t$. 
	We find that it can be expressed as a sum of terms of the form
	\begin{equation}
		F^{(p)}(\X_{1,t})F^{(q)}(\X_{ 2,t})\prod_{r=1}^p \partial_{ab}^{\alpha_r}\X_{1,t}\prod_{s=1}^q \partial_{ab}^{\beta_s}\X_{2,t},
		\label{eq:R_term}
	\end{equation}
	where
	\begin{equation*}
	1 \le 	p+q\le k+1, \quad \text{with} \quad p,q\in\mathbf{N}_{0}\quad\text{and}\quad \alpha_r, \beta_s \ge 0 \quad \text{are such that} \quad \sum_{r=1}^p \alpha_r + \sum_{s=1}^q \beta_s=k+1.
	\end{equation*}
	
For estimating \eqref{eq:R_term}, we point out that there will be two sources of smallness: First, every derivative $\partial_{ab}$ will gain one factor of $N_l^{-1/2}$, simply by our normalization \eqref{eq:Hdef}. In this way, at order $k$, the term in \eqref{eq:R_term} naturally carries a small factor $N_l^{-(k+1)/2}$ since $\sum \alpha_r + \sum \beta_s = k+1$. Second, near the spectral edge, imaginary parts of resolvents are especially small as expressed by the single-resolvent isotropic local law for Wigner matrices \cite[Eq. (2.30)]{small_dev}:
	\begin{equation} \label{eq:locallaw}
		\max_{i,j \in [N_l]} \left| (G_t^{(N_l)})_{ij}(z) - \delta_{ij}m_{\mathrm{sc}}(z) \right| \prec N_l^{-1/3 + \varepsilon},  \quad |\Im m_{\rm sc}(z)| \lesssim N_l^{-1/3 + \varepsilon}, \quad l =1,2,
	\end{equation}
valid	uniformly in $t \ge 0$ and $z \in \C \setminus \R$ with $|\Re z - 2| \le N_l^{-2/3 + \varepsilon}$ and $N_l^{-2/3 - \varepsilon} \le |\Im z| \le  N_l^{-2/3 + \varepsilon}$. In particular, this means that $|G_{ij}|\prec 1$ and  $|\Im G_{ij}|\prec N_l^{-1/3+\varepsilon}$ in our regime. 
We now discuss how these two sources of smallness play together in estimating \eqref{eq:R_term}. 
	
Concerning the first source of smallness, we compute
	\begin{equation*}
		\partial_{ab}\X_{l,t} = \frac{1}{\sqrt{N_l}}\frac{2-\delta_{ab}}{2\pi}\Im \big(G_{ab}^{(N_l)}(y+\ii\eta_l)+ G_{ba}^{(N_l)}(y+\ii\eta_l) \big)\Big|_{E_l}^{E_L} =\frac{1}{\sqrt{N_l}} \frac{2-\delta_{ab}}{2\pi}\Delta_l\Im \big( G_{ab}^{(N_l)}+G_{ba}^{(N_l)}\big),
	\end{equation*}
	where, for fixed $E_l, E_L$ and $\eta_l$, we define the finite difference operator
	\begin{equation*}
	\Delta_l f:= f(E_L+\ii\eta_l)-f(E_l+\ii\eta_l) \qquad l=1,2
	\end{equation*}
	acting on any function $f=f(z)$.
 In particular, the factor $\partial_{ab}^{\alpha_r}\X_{1,t}$ (and similarly for $\partial_{ab}^{\beta_s}\X_{2,t}$) decomposes into the sum of terms of the form
	\begin{equation}
		\Delta_1 \Im \prod_{\gamma=1}^{\alpha_r} G_{a_\gamma b_\gamma}^{(N_1)}, \quad a_\gamma,b_\gamma\in\{a,b\}.
		\label{eq:X_dif}
	\end{equation}
	In order to get an upper bound on the resolvent product \eqref{eq:X_dif} we employ \eqref{eq:locallaw}. In this way, concerning
	the second source of smallness, we hence get, for $\alpha_r \ge 1$, that
	\begin{equation} \label{eq:singleFdiff}
		\left\vert\Delta_1 \Im \prod_{\gamma=1}^{\alpha_r} G_{a_\gamma b_\gamma}^{(N_1)}\right\vert \lesssim \sum_{\gamma=1}^{\alpha_r}\big|\Im G_{a_\gamma b_\gamma}^{(N_1)}\big| \prec N_1^{-1/3+\varepsilon}=:\Psi, 
	\end{equation}
	where we effectively gained one $\Im G$-smallness for each $\gamma \in [\alpha_r]$. 
	
	Thus, for the terms \eqref{eq:R_term} with $k\ge 3$, using that $p + q \ge 1$, we have
	\begin{equation}
		\left\vert\sum_{a\le b} F^{(p)}(\X_{1,t})F^{(q)}(\X_{2,t})\prod_{r=1}^p \partial_{ab}^{\alpha_r}\X_{1,t}\prod_{s=1}^q \partial_{ab}^{\beta_s}\X_{2,t}\right\vert\prec \left\vert F^{(p)}(\X_{1,t})F^{(q)}(\X_{2,t})\right\vert \Psi,
		\label{eq:term_bound}
	\end{equation}
	where we additionally used that $N_l^{-(k+1)/2}N_l^2\le 1$. 
	
For the third-order terms with $k=2$ in \eqref{eq:R_dif} the power counting is slightly more complicated, since the first source of smallness (yielding $N_l^{-(k+1)/2}$) does not balance the $N_l^2$-fold summation. Hence, one has to
 further exploit  the second source of smallness \eqref{eq:singleFdiff}. Note that the $\Psi$-factor from \eqref{eq:singleFdiff} can be gained whenever $p \ge 1$ (or $q \ge 1$, respectively). The most critical term is thus with $p+q = 1$, i.e.~either $p=0$ or $q=0$.  However, these cases with $p=0$ or $q=0$ (i.e.~when one of the $F$'s is not differentiated at all) have already been treated in \cite[Eq.~(2.50)]{small_dev} via \emph{isotropic resummation}, yielding the bound 
\begin{equation} \label{eq:hardterms}
		\left\vert\sum_{a\le b} F^{(p)}(\X_{1,t})F^{(q)}(\X_{2,t})\prod_{r=1}^p \partial_{ab}^{\alpha_r}\X_{1,t}\prod_{s=1}^q \partial_{ab}^{\beta_s}\X_{2,t}\right\vert\prec N^{-1/6 + 2 \varepsilon}  \left\vert F^{(p)}(\X_{1,t})F^{(q)}(\X_{2,t})\right\vert \,. 
\end{equation}
We may hence assume that $p,q\ge 1$, in which case we can gain two factors of $\Psi$ in \eqref{eq:term_bound}. The corresponding sum has an upper bound of order
	\begin{equation} \label{eq:easyterms}
		\begin{split}
			&\left\vert\sum_{a\le b} F^{(p)}(\X_{1,t})F^{(q)}(\X_{2,t})\prod_{r=1}^p \partial_{ab}^{\alpha_r}\X_{1,t}\prod_{s=1}^q \partial_{ab}^{\beta_s}\X_{2,t}\right\vert \\
		& \qquad \qquad 	\prec	N_2^{-3/2}N_2^2\Psi^2 \left\vert F^{(p)}(\X_{1,t})F^{(q)}(\X_{2,t})\right\vert  \lesssim N_2^{-1/6+2\varepsilon} \left\vert F^{(p)}(\X_{1,t})F^{(q)}(\X_{2,t})\right\vert .
		\end{split}
	\end{equation}
	Thus, combining \eqref{eq:hardterms} and \eqref{eq:easyterms} for $k=2$ with \eqref{eq:term_bound} for $k \ge 3$,  we infer
	\begin{equation} \label{eq:selfcons}
	|	\partial_t \E R_t |\lesssim  N_2^{-1/6+3\varepsilon}\sum_{p+q\le K+1} \E \left\vert F^{(p)}(\X_{1,t})F^{(q)}(\X_{2,t})\right\vert + \mathcal{O}(N_2^{-D}).
	\end{equation}

	To finish the argument, it is left to notice that, by construction of the function $F$, for any non-negative integers $p$ and $q$ we have
	\begin{equation}
		\left\vert\E \left[ F^{(p)}(\X_{1,t})F^{(q)}(\X_{2,t})\right]\right\vert \lesssim \E \left[ F\big(\chi_{E_1-2\ell_1}*\theta_{\eta_1}\big(H^{(N_1)}_t\big)\big) \, F\big(\chi_{E_2-2\ell_2}*\theta_{\eta_2}\big(H^{(N_2)}_t\big)\big)\right] + \mathcal{O}(N_2^{-D}),
		\label{eq:F_dif}
	\end{equation}
	where the implicit constant in the error term depends only on $p$ and $q$. The proof of \eqref{eq:F_dif} closely follows the proof of \cite[Eq.~(2.46)]{small_dev} and thus is omitted. By plugging \eqref{eq:F_dif} into \eqref{eq:selfcons}, we conclude the proof of Proposition~\ref{prop:GFTunderlying}. 
\end{proof}

\section{Proof of Corollary \ref{cor:limits}} \label{sec:corollary}

We  start with the proof of \eqref{closure-2/3}. From Theorem~\ref{thm:main}, we know that for any fixed
 $t\in (0, (2\beta)^{-2/3})$, 
\begin{align}
	\mathbf{P}\left[ \frac{\lambda_1^{(N+1)}}{(\log (N+1))^{2/3}} \leq t\leq \frac{\lambda_1^{(N)}}{(\log N)^{2/3}},  \quad \text{i.o.}\right]=1, \label{031701}
\end{align} 
where \emph{i.o.} is short for \emph{infinitely often}. 
This is because the sequence $\frac{\lambda_1^{(N)}}{(\log N)^{2/3}}$ needs to go between $(2\beta)^{-2/3}$ and $0$,  back and forth, infinitely many times, as with an extra $(\log N)^{-1/3}$ factor, $\liminf \frac{\lambda_1^{(N)}}{(\log N)^{2/3}}\to 0$.  By \eqref{031701}, we know that for infinitely many $N$, 
\begin{align}
	0\leq \Big|t-\frac{\lambda_1^{(N+1)}}{(\log (N+1))^{2/3}}\Big|&=t-\frac{\lambda_1^{(N+1)}}{(\log (N+1))^{2/3}}\leq \frac{\lambda_1^{(N)}}{(\log N)^{2/3}}- \frac{\lambda_1^{(N+1)}}{(\log (N+1))^{2/3}}\notag\\
	&=\frac{\lambda_1^{(N+1)}}{(\log (N+1))^{2/3}}\left(\frac{(\log (N+1))^{2/3}}{(\log N)^{2/3}}-1\right)+\frac{\lambda_1^{(N)}-\lambda_1^{(N+1)}}{(\log N)^{2/3}} \label{031713}
\end{align}
Clearly, we have, again from Theorem \ref{thm:main}, that
\begin{align}
	\frac{\lambda_1^{(N+1)}}{(\log (N+1))^{2/3}}\Big(\frac{(\log (N+1))^{2/3}}{(\log N)^{2/3}}-1\Big)\to 0, \quad \text{a.s.}. \label{031711}
\end{align}
Next, we show that for any given constant $\varepsilon>0$, 
\begin{align*}
	\limsup_{N\to \infty}\frac{\lambda_1^{(N)}-\lambda_1^{(N+1)}}{(\log N)^{2/3}}\leq \varepsilon, \qquad \text{a.s.}
\end{align*}
To this end, we recall \eqref{031710}. Then we have 
\begin{align*}
	&\mathbf{P} \left[\frac{\lambda_1^{(N)}-\lambda_1^{(N+1)}}{(\log N)^{2/3}}\geq   \varepsilon\right]\leq \mathbf{P} \left[ \frac{\lambda_1^{(N+1)}-\lambda_1^{(N)}}{(\log N)^{2/3}}\leq  -\varepsilon\right]\\
	&\qquad \leq \mathbf{P}\left[  \frac{1}{N^{1/3}} \Big(|\xi_1^{(N+1)}|^2-1\Big)\leq -\frac{\varepsilon}{2}(\log N)^{2/3}\right]+N^{-D}\leq 2N^{-D}
\end{align*}
for any $D>1$. Hence, by the Borel-Cantelli lemma, we know that almost surely, $(\lambda_1^{(N)}-\lambda_1^{(N+1)})/(\log N)^{2/3}$ goes beyond $\varepsilon$ for only finitely many $N$. This together with \eqref{031711} and \eqref{031713}, we know that there exist infinitely many $N$, such that 
\begin{align*}
	0\leq \Big|t-\frac{\lambda_1^{(N+1)}}{(\log (N+1))^{2/3}}\Big|\leq \varepsilon. 
\end{align*}
Due to the arbitrariness of $\varepsilon$, we know  that $t$ is a limit points of the sequence ${\lambda_1^{(N)}}/{(\log N)^{2/3}}$. This concludes the proof of \eqref{closure-2/3}. 

The proof of \eqref{closure-1/3} is analogous: From Theorem \ref{thm:main}, we know that for any $t\in ((-8/\beta)^{1/3}, \infty)$, we have 
\begin{align*}
	\mathbf{P}\left[ \frac{\lambda_1^{(N+1)}}{(\log (N+1))^{1/3}} \leq t\leq \frac{\lambda_1^{(N)}}{(\log N)^{1/3}},  \quad \text{i.o.}\right]=1
\end{align*}
Then the remaining proof is the same as that of \eqref{closure-2/3} with $(\log N)^{2/3}$ replaced by $(\log N)^{1/3}$. 
This concludes the proof of Corollary \ref{cor:limits}. \qed

\section{Extension to more general models}\label{sec:wignertype}

In Remark~\ref{rm:ext}  we presented a more general class of random matrix ensembles
(Wigner-type matrices with diagonal deformations) for which the law of fractional logarithm 
also holds. Here we explain how our proof for Wigner matrices can be modified to
cover this more general case without going into all technical details that are relatively standard.

The proof presented in Sections~\ref{sec:2}, \ref{app:decor} has three key ingredients: 
\begin{itemize}
\item[(i)]  Tail bound (Proposition~\ref{prop:tails});
\item[(ii)]  Decorrelation estimate (Proposition~\ref{prop:decor});
\item[(iii)] Martingale argument (Section~\ref{subsec:maxmart}, especially the lower bound \eqref{diff}).
\end{itemize}
Given these ingredients
the rest of the proof is insensitive to the actual ensemble.

As to (i), we proved the tail bound (Proposition~\ref{prop:tails}) by a GFT argument, essentially borrowed
from \cite{small_dev}, that connected the Wigner ensemble with a Gaussian ensemble
along a long time OU flow. This would not work directly for Wigner type matrices whose 
scDOS is not the semicircle law; especially not for internal edges
in case the scDOS is supported on several intervals.
 Alternatively,  one may use a  similar short time DBM approximation 
as in Section~\ref{subsec:DBM}. In essence, after time $t\ge N^{-1/3+\epsilon}$, the evolution
of the true eigenvalues $\widetilde\lambda^{(N)}_i(t)$ can be coupled 
to GUE/GOE eigenvalues $\mu_i^{(N)}(t)$ with  precision of order $N^{-2/3-\omega}$
as given in \eqref{eq:DBMminor}.  This precision is much finer than the $N^{-2/3}(\log N)^{2/3}$ 
or  $N^{-2/3}(\log N)^{1/3}$ scales on which the tail bounds live, therefore the 
tail bounds for $\mu$’s can be directly transferred to those for $\widetilde\lambda(t)$’s.
Finally we use a standard short time GFT argument to remove the small Gaussian component;
here the time is sufficiently short that the scDOS does not change. The proof also works 
 for internal endpoints of the support of $\rho$ in case it consists of 
several component separated by gaps of order at least $N^{-2/3+\epsilon}$. This distance
guarantees that the gaps do not close under the short time DBM and GFT flows.
We remark that the same short time DBM method has been  used to prove the
Tracy-Widom universality in many different situations; first 
 for random matrices with a small  Gaussian component in \cite{landon2017edge},
and also  without Gaussian component after combining DBM with GFT 
in \cite{Bou_extreme, Alt_band}. However, compared with these  universality proofs, here we need to 
keep very effective error terms both in the DBM and in the GFT arguments
since our estimate is in the moderate deviation regime.

Turning to (ii), the decorrelation estimate has two parts that are sensitive to the ensemble.
 On one hand we relied on a two-resolvent local law
to prove the overlap bound \eqref{eq:overlapbound}, on the other hand in Section~\ref{subsec:GFT} 
we performed a short time  GFT 
argument for $R_t$ defined in \eqref{eq:def_R}. This second step is unproblematic as it is completely analogous to
the GFT used in the tail bound in part  (i). The overlap bound is more involved. Multi-resolvent
local laws can be routinely proven by the {\it zigzag strategy} as it was done 
in \cite[Proposition~A.1]{minor} that extended \cite[Theorem~3.2]{eigenv_decorr}
assuming the necessary bounds on the deterministic approximation holds.
Thanks to the very simple variance structure, the deterministic approximation 
for resolvents of Wigner matrices $W$ or even of Wigner matrices with an arbitrary deterministic deformation matrix $D$
is relatively simple. For example the resolvent $(W+D-z)^{-1}$ can be well approximated by the solution $M=M(z)$ of 
the Dyson equation 
$$
    -M^{-1} = z-D +\langle M \rangle, \qquad (\Im M)(\Im z)>0 
 $$
i.e. $M$ is just the resolvent of $D$ at a shifted spectral parameter. Similarly, the
deterministic approximation of a product of two resolvents of the form
$(W-z_1)^{-1}(W+D-z_2)^{-1}$ can be explicitly written up and hence easily estimated in terms of $\langle D^2\rangle^{-1}$
(a special case of this was used in Section 4.2 of \cite{minor} which yielded
the overlap bound \eqref{eq:overlapbound}). 
The deterministic approximation $M$ for  the resolvent of Wigner type matrices $H$  is much more complicated even with
a diagonal deformation matrix (or even without any deformation) and it cannot be expressed as a simple resolvent;
it is  implicitly given via the unique solution of the corresponding nonlinear Dyson equation \cite{QVE}.
Nontrivial estimates containing $\langle D^2\rangle^{-1}$  on the 
deterministic approximation to the product of two resolvents $(H-z_1)^{-1}(H+D-z_2)^{-1}$ would be even more subtle
(compare it with  Lemma 4.7 of \cite{ETH_W-type} where a nontrivial  decay in $z_1-z_2$ was extracted but only for $D=0$;
 here we would need to extract the  nontrivial effect of $D$).  
 A much simpler alternative is to use the flatness condition $\Sigma_{ij} \ge c$ in  \eqref{eq:Hdefgen} to 
 approximate the Wigner type matrix  as $H\approx  H_0  + \frac{c}{2} W$, where $W$ is a standard Wigner matrix and $H_0$
 is an independent Wigner type matrix. Here the $\approx$ means that the
 first three moments match; such approximation is standard (see e.g \cite[Lemma 16.2]{erdHos2017dynamical}).  We then condition on $H_0$ and use the results from \cite[Proposition A.1]{minor} and \cite[Propositions 3.1 and 4.6]{eigenv_decorr} to estimate the product
 of resolvents of $H_0  + \frac{c}{2} W$ and $H_0 +D + \frac{c}{2} W$. Finally, we need to use another GFT
 to conclude about the product of resolvents of $H$ and $H+D$. Since the third moments match exactly,
 the powerful one by one Lindeberg  replacement strategy \cite{TaoVu11} and \cite[Chapter 16]{erdHos2017dynamical} is applicable and transfers the necessary bound of order $\langle D^2\rangle^{-1}$.

Finally, for the third ingredient (iii), the martingale argument used the specific unit variances
of the   Wigner ensemble  in the formula
$\mathbf{E} [ |\xi_{\alpha}^{(n)}|^2 \; | \mathcal{F}_{n-1}]=1$ to find that $X_{k,n}$ in \eqref{eq:martdef}
is a martingale. To preserve this property, 
we need to replace  $|\xi_{1}^{(\ell)}|^2 -1$  with $|\xi_{1}^{(\ell)}|^2 -
\mathbf{E} [ |\xi_{1}^{(\ell)}|^2 \; | \mathcal{F}_{\ell-1}]$, 
in \eqref{diff} and hence in the definition of $X_{k,n}$. The same change  in \eqref{mainid}--\eqref{eq:mainid3}
 and  simple algebraic properties
of the Stieltjes transform of the scDOS show that 
the net effect of this modification is to change the $2$ in the lhs.~of \eqref{mainid}--\eqref{eq:mainid2} and in \eqref{eq:mainid3} to the actual endpoint $E$ of the support of the 
scDOS for $H^{(N)}$. This is  exactly what is  needed to conclude that the martingale lower bound \eqref{diff}
still holds for the rescaled eigenvalue differences. Here, the correct shift and rescaling factor in 
the modified definition of $\lambda_1^{(N)}$ \eqref{eq:TWconv} takes into account that
$2$ is replaced with $E$ and an additional multiplicative factor accounts for
the constant in front of the asymptotic behavior $\rho(x)\approx c\sqrt{(E-x)_+}$ of the scDOS
near $E$. This argument holds also for internal endpoints $E$ of the support of $\rho$ if its components are
separated by gaps of order at least $N^{-2/3+\epsilon}$. 
Once \eqref{diff} is proven, the rest of Section~\ref{subsec:maxmart} is independent
of the actual ensemble.

\bibliographystyle{plain} 
\bibliography{refs}

\end{document}